\newskip\paraskip
\declaretheoremstyle[headformat=swapnumber, spaceabove=\paraskip,
bodyfont=\itshape]{mystyle}
\declaretheorem[name=Theorem, sibling=para, style=mystyle]{Theorem}
\declaretheoremstyle[numbered=no, spaceabove=\paraskip,
bodyfont=\itshape]{mystyle-empty}
\declaretheorem[name=Lemma, style=mystyle-empty]{Lemma*}
\declaretheorem[name=Proposition, style=mystyle-empty]{Proposition*}
\declaretheorem[name=Theorem, style=mystyle-empty]{Theorem*}
\declaretheorem[name=Corollary, style=mystyle-empty]{Corollary*}
\declaretheorem[name=Definition, style=mystyle-empty]{Definition*}
\declaretheorem[name=Example, style=mystyle-empty]{Example*}
\declaretheorem[name=Remark, style=mystyle-empty]{Remark*}
\declaretheoremstyle[
        headformat={{\bfseries\NUMBER.}{ \bfseries\NAME}},
        spaceabove=\paraskip, 
        headpunct={. },
        headfont=\bfseries,
        bodyfont=\normalfont
        ]{mystyle-plain}
\declaretheorem[name=Remark, sibling=para, style=mystyle-plain]{Remark}
\renewenvironment{proof}[1][\textit{Proof}]{\par
  \pushQED{\qed}%
  \normalfont \topsep.75\paraskip\relax
  \trivlist
  \item[\hskip\labelsep
        \itshape
    #1\@addpunct{.}]\ignorespaces
}{%
  \popQED\endtrivlist\@endpefalse
}
\newcommand\NN{\mathbb N}
\newcommand\CC{\mathbb C}
\newcommand\ZZ{\mathbb Z}
\newcommand\D{\overline D}
\newcommand\DD{\mathcal D}
\newcommand\N{\mathcal N}
\renewcommand\O{\mathcal O}
\newcommand\ot{\otimes}
\renewcommand\to{\longrightarrow}
\renewcommand\phi{\varphi}
\newcommand\id{\mathsf{id}}
\newcommand\g{\mathfrak g}
\newcommand\p{\mathfrak p}
\newcommand\m{\mathfrak m}
\newcommand\gl{\mathfrak{gl}}
\newcommand\std{\mathsf{std}}
\newcommand\interval[1]{\llbracket #1 \rrbracket}
\newcommand\Shuffle{\mathsf{Shuffle}}
\DeclareMathOperator\Specm{Specm}
\DeclareMathOperator\sym{sym}
\DeclareMathOperator\asym{asym}
\DeclareMathOperator\sg{sg}
\DeclareMathOperator\st{\mathsf{st}}
\newcommand\bigmodule{big GT module}
\title{Gelfand-Tsetlin modules over $\gl(n)$ with arbitrary characters}
\author{L.E. Ram\'irez\footnote{Universidade Federal do ABC, Santo Andr\'e-SP, 
Brasil \texttt{email:} luis.enrique@ufabc.edu.br} , 
P. Zadunaisky\footnote{Instituto de Matem\'atica e Estat\'istica, Universidade 
de S\~ao Paulo,  S\~ao Paulo SP, Brasil. \texttt{email:} pzadun@ime.usp.br.
The author is a FAPESP PostDoc Fellow, grant: 2016-25984-1 
S\~ao Paulo Research Foundation (FAPESP).}
}
\date{}
\begin{document}
\maketitle

\begin{abstract}
A Gelfand-Tsetlin tableau $T(v)$ induces a character $\chi_v$ of the 
Gelfand-Tsetlin subalgebra $\Gamma$ of $U = U(\gl(n,\CC))$. By a theorem due 
to Ovsienko, for each tableau $T(v)$ there exists a finite number of 
nonisomorphic irreducible Gelfand-Tsetlin modules with $\chi_v$ in its 
support, though explicit examples of such modules are only known for special
families of characters. In this article we build a family of Gelfand-Tsetlin
modules parametrized by characters, such that each character appears in its
corresponding module. We also find the support of these modules, with 
multiplicities.
\end{abstract}
\noindent\textbf{MSC 2010 Classification:} 17B10.\\
\noindent\textbf{Keywords:} Gelfand-Tsetlin modules, Gelfand-Tsetlin bases,
tableaux realization.

\section{Introduction}
The notion of a Gelfand-Tsetlin module (see Definition \ref{D:gt-module}) has 
its origin in the classical article \cite{GT-modules}, where I. Gelfand and M. 
Tsetlin gave an explicit presentation of all finite dimensional irreducible 
representations of $\g = \gl(n,\CC)$ in terms of certain combinatorial 
objects, which have come to be known as \emph{Gelfand-Tsetlin tableaux}, or 
GT tableaux for short. A GT tableau is a triangular array of $\frac{n(n+1)}{2}$
complex numbers, with $k$ entries in the $k$-th row; given a point $v \in 
\CC^{\frac{n(n+1)}{2}}$ we denote the corresponding array by $T(v)$. The group 
$G = S_1 \times S_2 \times \cdots \times S_n$ acts on the set of all tableaux, 
with $S_k$ permuting the elements in the $k$-th row. Gelfand and Tsetlin's
theorem establishes that any finite dimensional irreducible representation of 
$\g$ has a basis parameterized by GT tableaux with integer entries satisfying 
certain betweenness relations. Identifying the elements of the basis with the 
corresponding GT tableaux, the action of an element of $\g$ over a 
tableau is given by rational functions in its entries. These rational 
functions are known as the Gelfand-Tsetlin formulas; their poles form an 
infinite hyperplane array in $\CC^{\frac{n(n+1)}{2}}$.

The enveloping algebra $U = U(\g)$ contains a large (indeed, maximal) 
commutative subalgebra $\Gamma$ called the \emph{Gelfand-Tsetlin} subalgebra 
of $U$. A \emph{Gelfand-Tsetlin module} is a $U$-module that can be
decomposed as the direct sum of generalized eigenspaces for $\Gamma$.
The characters of $\Gamma$ are in one-to-one correspondence with GT tableaux 
modulo the action of $G$ (see \cite{Zh-compact-book}), and in the original 
construction of Gelfand and Tsetlin each tableau $T(v)$ is an eigenvector of 
$\Gamma$ whose eigenvalue is precisely the character $\chi_v: \Gamma \to \CC$ 
corresponding to $v$. Since no two tableaux in this construction are in the 
same $G$-orbit, the multiplicity of this character (i.e. the number of 
eigenvectors of eigenvalue $\chi_v$) is one.

Ovsienko proved in \cites{Ovs-finiteness, Ovs-strongly-nilpotent} that for 
each character $\chi: \Gamma \to \CC$ there exists a nonzero finite number of 
Gelfand-Tsetlin $U$-modules with $\chi$ in its character support. Many such 
modules have been constructed for different classes of characters, such as 
standard \cite{GT-modules}, generic \cite{DFO-GT-modules}, $1$-singular 
\cite{FGR-1-singular}, index $2$ \cite{FGR-2-index}, etc. However, no explicit 
construction of such modules is known for arbitrary characters. 

The first work in this direction is due to Y. Drozd, S. Ovsienko and V. 
Futorny, who introduced a large family of infinite dimensional $\g$-modules 
in \cite{DFO-GT-modules}. These GT modules have a basis parameterized by 
Gelfand-Tsetlin tableaux with complex coefficients such that no pattern is a 
pole for the rational functions appearing in the GT formulae (such tableaux 
are called \emph{generic}, hence the name ``generic Gelfand-Tsetlin module''). 
While each character in the decomposition of a generic Gelfand-Tsetlin module
appears with multiplicity one, there are examples of non-generic GT modules
with higher multiplicities. These examples were first encountered in 
\cites{Fut-generalization-Verma, Fut-semiprimitive} for $\mathfrak{sl}(3)$.

In \cite{FGR-1-singular} V. Futorny, D. Grantcharov and the first named author
constructed a GT module with $1$-singular characters, i.e. characters 
associated to tableaux over which the Gelfand-Tsetlin formulas may have 
singularities of order at most $1$. These modules have a basis in terms of 
so-called \emph{derived tableaux}, new objects which, according to the authors 
``are not new combinatorial objects'' but rather formal objects in a large
vector-space that contains classical GT tableaux. This construction was 
expanded and refined in the articles \cites{FGR-2-index, Zad-1-sing, 
V-geometric-singular-GT} for characters with more general singularities. The 
aim of this article is to extend this construction to \emph{arbitrary} 
characters and calculate the mutliplicity of the corresponding characters. 
This is achieved in section \ref{SINGULAR-GT}; in the process we 
disprove the statement above and give a combinatorial interpretation of 
derived tableaux. 

The general idea of our construction is the following. Let $K$ be the field of 
rational functions over the space of GT tableaux, and denote by $V$ the $K$ 
vector-space of arbitrary integral GT tableaux. This is a $U$-module, which we
call the \bigmodule, with the action of $\g$ given by the Gelfand-Tsetlin 
formulas. These rational functions lie in the algebra $A$ of regular functions 
over generic tableaux, and hence the $A$-lattice $L_A$ whose $A$-basis is the 
set of all integral tableaux is a $U$-submodule of $V$; now given a generic 
tableau $T(v)$, we can recover the corresponding generic module by 
specializing $V_A$ at $v$. This idea breaks down if $T(v)$ is a singular 
tableau, and in that case we replace $A$ with an algebra $B \subset K$ such 
that $(1)$ evaluation at $v$ makes sense and $(2)$ there exists a $B$-lattice 
$L_B \subset V$ which is also a $U$-submodule. Once this is done, the rest of 
the construction follows as in the generic case.

Denote by $\mu$ the composition of $\frac{n(n+1)}{2}$ given by 
$(1,2,\ldots, n)$. Each point $v \in \CC^{\frac{n(n+1)}{2}}$, or rather its 
class modulo $G$, defines a refinement $\eta(v)$ of $\mu$, and it turns out 
that the structure of the associated GT module $V(T(v))$ depends heavily on 
$\eta(v)$. While it is possible in principle to choose an algebra $B$ and a 
lattice $L_B$ that works for all $v$ simultaneously, we get more information 
by fixing a refinement $\eta$ and focusing on characters with $\eta(v) = 
\eta$, thus obtaining an algebra $B_\eta$ and a $B_\eta$-lattice $L_\eta$. 
Each $L_\eta$ has a basis of derived tableaux, and changing $\eta$ changes 
this basis in an essential way.

As shown in \cite{FGR-generic-irreducible}, the generic GT modules from 
\cite{DFO-GT-modules} are universal, in the sense that any irreducible GT 
module with a generic character in its support is isomorphic to a subquotient
of the corresponding generic GT module. The $1$-singular GT modules built in 
\cite{FGR-1-singular} are also universal with respect to $1$-singular 
characters, see \cite{FGR-drinfeld}. Since these modules are special cases of
our construction, we expect the singular GT modules built in this article to
be universal with respect to the characters in their support.

While finishing this paper the article \cite{V-geometric-singular-GT} by 
E. Vishnyakova was uploaded to the ArXiv, containing a similar construction
of $p$-singular GT modules, where $p \in \NN_{\geq 2}$. This is a special 
class of singular modules, associated to classes $v \in 
\CC^{\frac{n(n+1)}{2}}/G$ where the composition $\eta(v)$ has only one 
nontrivial part, which is equal to $p$.

\bigskip

The article is organized as follows. In section \ref{PREELIMINARIES} we 
set the notation used for the combinatorial invariants associated to tableaux. 
In section \ref{GT-MODULES} we review some basic facts on GT modules, including
the construction of generic and $1$-singular GT modules in terms of the
\bigmodule. In section \ref{SDD-OPERATORS} we study certain operators related
to divided differences which play a central role in the construction of the 
lattices $L_\eta$. Finally in section \ref{SINGULAR-GT} we present the lattices
$L_\eta$, build the GT modules associated to a character $\chi_v$ and find
its character support with the corresponding multiplicites.

\bigskip

\section{Preeliminaries}
\label{PREELIMINARIES}

\paragraph
Let $n,m \in \NN$. We write $\interval{n,m} = \{k \in \NN \mid n \leq k \leq 
m\}$ and $\interval{n} = \interval{1,n}$. We denote by $S_n$ the symmetric 
group on $n$ elements.
Recall that for each $\sigma \in S_n$ the length of $\sigma$, denoted by 
$\ell(\sigma)$, is the number of inversions of $\sigma$, i.e. the number
of pairs $(i,j) \in \interval{n}^2$ such that $i<j$ but $\sigma(i) > 
\sigma(j)$. There exists a unique longest word $w_0 \in S_n$ such that
$\ell(w_0) = \frac{n(n-1)}{2}$. Also $\ell(\sigma) = \ell(\sigma^{-1})$, and 
$\ell(\sigma^{-1} w_0) = \ell(w_0) - \ell(\sigma)$. For each $i \in 
\interval{n-1}$ the $i$-th simple transposition is $s_i = (i,i+1) \in S_n$. 
Simple transpositions generate $S_n$, and the length of $\sigma \in S_n$ is 
the minimal $l$ such that $\sigma$ can be written as $s_{i_1} s_{i_2} \cdots 
s_{i_l}$. Any such writing is called a reduced decomposition of $\sigma$.

\paragraph
\textbf{Compositions.}
\label{compositions}
Recall that a composition of $n$ is a sequence $\mu = (\mu_1, \ldots, \mu_r)$
of positive integers such that $\sum_{i=1}^r \mu_i = n$. The $\mu_k$ are called
the parts of $\mu$. Now let $\mu = (\mu_1, \ldots, \mu_r)$ be a composition of 
$n$. For each $k \in \interval{r}$ set $\alpha_k = \alpha_k(\mu) = 
\sum_{j=1}^{k-1} \mu_j + 1$ and $\beta_k = \beta_k(\mu) = \sum_{j=1}^{k} 
\mu_j$, so the interval $\interval{\alpha_k, \beta_k}$ has $\mu_k$ elements; 
we refer to this interval as the $k$-th block of $\mu$. 

Denote by $S_\mu \subset S_n$ the subgroup of bijections $\sigma$ such that
$\sigma(\interval{\alpha_k, \beta_k}) = \interval{\alpha_k, \beta_k}$ for each
$k \in \interval{r}$. This is a parabolic subgroup of $S_n$ in the sense of 
\cite{BB-coxeter-book}*{section 2.4}, and we review some of the properties
discussed there. By definition each $\sigma \in S_\mu$ is a product of 
the form $\sigma = \sigma^{(1)} \sigma^{(2)} \cdots \sigma^{(r)}$ with each 
$\sigma^{(j)}$ the identity on each $\mu$-block except the $j$-th. The length 
of an element on $S_\mu$ is $\ell(\sigma) = \ell(\sigma^{(1)}) + \cdots + 
\ell(\sigma^{(r)})$, and $S_\mu$ has a unique longest word $w_\mu$ with 
$w_\mu^{(k)}$ the longest word of the permutation group of $\interval{\alpha_k,
\beta_k}$. We set $\mu! = \# S_\mu = \mu_1! \mu_2! \cdots \mu_r!$. 

Put $\Sigma(\mu) = \{(k,i) \mid k \in \interval{r}, i \in \interval{\mu_k}
\}$ and let $\gamma_\mu: \Sigma(\mu) \to \interval{n}$ given by 
$\gamma_\mu(k,i) = i + \sum_{j=1}^{k-1} \mu_j$. This map is a bijection, and 
through it $S_\mu$ acts on $\Sigma(\mu)$. We will often identify a permutation 
$\sigma \in S_\mu$ by its action on $\Sigma(\mu)$; for example, we denote by 
$s_i^{(k)}$ the simple transposition in $S_\mu$ which acts on $\Sigma(\mu)$
by interchanging $(k,i)$ and $(k,i+1)$, leaving all other elements fixed. With
this notation, $\sigma^{(k)}$ leaves all elements of the form $(j,l)$ with 
$j \neq k$ fixed. 

Set
\begin{align*}
\sym_\mu 
  &= \frac{1}{\mu!}\sum_{\sigma \in S_\mu} \sigma;
&\asym_\mu
  &= \frac{1}{\mu!} \sum_{\sigma \in S_\mu} \sg(\sigma) \sigma.
\end{align*}
These are idempotent elements of the group algebra $\CC[S_\mu]$ and given a 
$\CC[S_\mu]$-module $V$, multiplication by $\sym_\mu$, resp. $\asym_\mu$, is 
the projection onto the symmetric, resp. antisymmetric, component of $V$.

\paragraph
\textbf{Refinements.}
\label{refinements}
A \emph{refinement} of $\mu$ is a collection of compositions $\eta = 
(\eta^{(1)},\ldots, \eta^{(r)})$ with each $\eta^{(k)}$ a composition of 
$\mu_k$. If $\eta$ is a refinement of $\mu$ then the concatenation of the 
$\eta^{(k)}$'s is also a composition of $n$, which by abuse of notation we 
will also denote by $\eta$.  

If $\eta$ refines $\mu$ then $S_\eta \subset S_\mu$. We say that $\sigma \in S_
\mu$ is a $\eta$-shuffle if it is increasing in each $\eta$-block. Among the 
elements of a coclass $\sigma S_\mu \in S_\mu/S_\eta$ there is exactly one 
$\eta$-shuffle, and this is the unique element of minimal length in the 
coclass. We denote the set of all $\eta$-shuffles in $S_\mu$ by 
$\Shuffle^\mu_\eta$. The group $S_\eta$ acts on $\Sigma(\mu)$ by restriction,
and the orbits of this action are also called the $\eta$-blocks of $\mu$.

\paragraph
\label{mu-points}
We write $\CC^{\mu} = \CC^{\mu_1} \oplus \CC^{\mu_2} \oplus \cdots \oplus
\CC^{\mu_r}$; thus $v \in \CC^{\mu}$ is an $r$-uple of vectors $(v_1, \ldots,
v_r)$ with $v_k \in \CC^{\mu_k}$. We refer to the elements of $\CC^\mu$ as 
\emph{$\mu$-points}, or simply points if the composition $\mu$ is fixed. 
For each $(k,i) \in \Sigma(\mu)$ we write 
$v_{k,i}$ for the $i$-th coordinate of $v_k$. We refer to the $v_k$'s as the 
\emph{$\mu$-blocks} of $v$, and to the $v_{k,i}$'s as the \emph{entries} of 
$v$. We say that a $\mu$-point $v$ is \emph{integral} if all its entries lie 
in $\ZZ$. Clearly $\CC^\mu$ is an affine variety, and we denote by $\CC[X_\mu]$
the polynomial algebra generated by $x_{k,i}$ with $(k,i) \in \Sigma(\mu)$, 
which is the coordinate ring of $\CC^\mu$. We also denote by $\CC(X_\mu)$ the 
field of fractions of $\CC[X_\mu]$. The group $S_\mu$ acts on $\CC^\mu$ in an 
obvious way, and this induces actions on $\CC[X_\mu]$ and $\CC(X_\mu)$.

If $\eta$ is a refinement of $\mu$ then there exists an isomorphism $\CC^\eta 
\cong \CC^\mu$, so we can talk of the $\eta$-blocks of $v \in \CC^\mu$. 
Since $\eta$ refines $\mu$ we get an inclusion $S_\eta \subset S_\mu$, and so 
$S_\eta$ acts on $\CC^\mu$ by restriction. The isomorphism $\CC^\eta \cong 
\CC^\mu$ is $S_\eta$-equivariant.

\section{Gelfand-Tsetlin modules}
\label{GT-MODULES}
For the rest of this article we fix $n \in \NN$ and set $N = 
\frac{n(n+1)}{2}$. 

\paragraph
\label{D:gt-module}
For each $k \in \interval{n}$ we denote by $U_k$ the enveloping algebra of 
$\gl(k,\CC)$, and set $U = U_n$. Inclusion of matrices in the top left corner 
induces a chain
\begin{align*}
\gl(1,\CC) \subset \gl(2, \CC) \subset \cdots \subset \gl(n,\CC),
\end{align*}
which in turn induces a chain $U_1 \subset U_2 \subset \cdots \subset U_n$. 
Denote by $Z_k$ the center of $U_k$ and by $\Gamma$ the subalgebra of $U$ 
generated by $\bigcup_{k=1}^n Z_k$. This algebra is the \emph{Gelfand-Tsetlin} 
subalgebra of $U$, and it is generated by the elements
\begin{align*}
c_{k,i}
  &= \sum_{(r_1, \ldots, r_i) \in \interval{k}^i} 
    E_{r_1, r_2} E_{r_2, r_3} \cdots E_{r_i, r_1},
  & (k,i) \in \Sigma(\mu).
\end{align*}
By work of Zhelobenko there exists an isomorphism $\iota: \Gamma \to 
\CC[X_\mu]^{S_\mu}$, given by $\iota(c_{k,i}) = \gamma_{k,i}$ where
\begin{align*}
\gamma_{k,i}
  &= \sum_{j=1}^k (x_{k,j} + k - 1)^i 
    \prod_{m \neq j} \left( 
      1 - \frac{1}{x_{k,j} - x_{k,m}}
    \right),
\end{align*}
see \cite{FGR-1-singular}*{subsection 3.1} for details. 
It follows that $\Specm \Gamma \cong \CC^\mu / S_\mu$, and so every 
$\mu$-point $v$ induces a character $\chi_v: \Gamma \to \CC$ by setting
$c \in \Gamma \mapsto \iota(c)(v)$, and two $\mu$-points induce the same 
character if and only if they lie in the same $S_\mu$-orbit. 

\begin{Definition*}
A finitely generated $U$-module $M$ is called a \emph{Gelfand-Tsetlin module} 
if
\[
  M = \bigoplus_{\m \in \Specm \Gamma} M[\mathfrak m],
\] 
where $M[\mathfrak m] = \{x \in M \mid \mathfrak m^k v = 0 \mbox{ for some } k 
\geq 0\}$.
\end{Definition*}
Let $M$ be a Gelfand-Tsetlin module.
Identifying $\mathfrak m$ with the character $\chi: \Gamma \to \Gamma / 
\mathfrak m \cong \CC$, we also write $M[\chi]$ for $M[\mathfrak m]$. We say 
that $\chi$ is a Gelfand-Tsetlin character of $M$ if $M[\chi] \neq 0$ and
define the multiplicity of $\chi$ in $M$ as $\dim_\CC M[\chi]$. The 
Gelfand-Tsetlin support of $M$ is the set of all of its Gelfand-Tsetlin 
characters. We will often abreviate Gelfand-Tsetlin for GT, or ommit it
completely when it is clear from the context.

\paragraph
\label{T:gt-tableaux}
\textbf{Gelfand-Tsetlin tableaux.}
For the rest of this document we denote by $\mu$ the composition $(1, 2, 
\ldots, n)$ of $N$. To each $\mu$-point we assign a triangular array  

\begin{tikzpicture}
\node (tv) at (-3, 1.5) {$T(v) = $};

\node (n1) at (-2,2.5) {$v_{n,1}$};
\node (n2) at (-1,2.5) {$v_{n,2}$};
\node (ndots) at (0,2.5) {$\cdots$};
\node (nn-1) at (1,2.5) {$v_{n,n-1}$};
\node (nn) at (2,2.5) {$v_{n,n}$};

\node (n-11) at (-1.5,2) {$v_{n-1,1}$};
\node (n-1dots) at (0,2) {$\cdots$};
\node (n-1n-11) at (1.5,2) {$v_{n-1,n-1}$};

\node (dots1) at (-1,1.625) {$\ddots$};
\node (dots2) at (0,1.5) {$\cdots$};
\node (dots3) at (1,1.625) {$\iddots$};

\node (21) at (-.5,1) {$v_{2,1}$};
\node (22) at (.5,1) {$v_{2,2}$};
\node (11) at (0,.5) {$v_{1,1}$};

\node (A) at (-3.5, 2.75) {};
\node (B) at (3.5, 2.75) {};
\node (C) at (0,0) {};
\end{tikzpicture}

Such an array is known as a \emph{Gelfand-Tsetlin tableau}. With this 
identification the group $S_\mu$ acts on the space of all tableaux.
The main difference between the space of $\mu$-points and the space of
all tableaux is that the first is obviously a $\CC$-vector-space, while
the second one is not. In particular $T(v+w) \neq T(v) + T(w)$, since the
second expression does not make sense (though we will eventually consider
a vector-space generated by tableaux). 

A \emph{standard} $\mu$-point is one in which $v_{k,i} - v_{k-1,i} \in 
\ZZ_{\geq 0}$ and $v_{k-1,i} - v_{k,i+1} \in \ZZ_{>0}$ for all $1 \leq i < k 
\leq n$. We denote by $\CC^\mu_\std$ the set of all standard $\mu$-points.
Notice that given $\lambda = (\lambda_1, \ldots, \lambda_n)$, there
exists finitely many standard tableaux with top row equal to $\lambda - (0, 1, 
2, \ldots, n-1)$, and that this number is nonzero if and only if $\lambda_i - 
\lambda_{i+1} \in \ZZ_{\geq 0}$, i.e. $\lambda$ must be a dominant integral 
weight of $\gl(n, \CC)$. This definition was introduced by Gelfand and Tsetlin 
in their article \cite{GT-modules} in order to give an explicit presentation 
of irreducible $\gl(n,\CC)$-modules.

\begin{Theorem*}[\cites{GT-modules, Zh-compact-book}]
Let $\lambda = (\lambda_1, \ldots, \lambda_n)$ be a dominant integral weight 
of $\gl(n,\CC)$, and let $V(\lambda)$ be the complex vector-space freely 
generated by the set
\begin{align*}
\left\{ T(v) \mid v \in \CC^\mu_{\std} 
    \mbox{ and } v_{n,1} = \lambda_1, v_{n,2} = \lambda_2 - 1, \ldots, 
    v_{n,n} = \lambda_{n} - n +1
  \right\}
\end{align*}
(by convention, if $v$ is non-standard then $T(v) = 0$ in $V(\lambda)$).
The vector-space $V(\lambda)$ can be endowed with a $\gl(n,\CC)$-module 
structure, with the action of the canonical generators given by
\begin{align*}
E_{k,k+1} T(v) 
  &= - \sum_{i=1}^k \frac{
    \prod_{j=1}^{k+1} (v_{k,i} - v_{k+1, j})}
    {\prod_{j \neq i}(v_{k,i} - v_{k,j})} T(v + \delta^{k,i}), \\
E_{k+1,k} T(v) 
  &= \sum_{i=1}^k \frac{
    \prod_{j=1}^{k-1} (v_{k,i} - v_{k-1,j})}
    {\prod_{j\neq i}(v_{k,i} - v_{k,j})} T(v - \delta^{k,i}), \\
E_{k,k} T(v)
  &= \left(\sum_{j=1}^k v_{k,j} - \sum_{j=1}^{k-1} v_{k-1,1} + k -1\right) 
    T(v),
\end{align*}
where $\delta^{k,i}$ is the element of $\ZZ^\mu$ with a $1$ in position $(k,i)$
and $0$'s elsewhere. Furthermore, for each $c \in 
\Gamma$ we have $c_{k,i} T(v) = \gamma_{k,i}(v) T(v)$. 
\end{Theorem*}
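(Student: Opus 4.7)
The plan is to prove both the module structure and the $\Gamma$-eigenvalue formula simultaneously by induction on $n$, using branching from $\gl(n,\CC)$ to $\gl(n-1,\CC)$. The base case $n=1$ is immediate. For the inductive step, I first observe that the admissible $(n-1)$-th rows of tableaux in $V(\lambda)$ correspond, via the shift $\nu_i = v_{n-1,i} + i - 1$, precisely to the dominant integral weights $\nu$ of $\gl(n-1,\CC)$ that interlace $\lambda$ --- this is the classical $\gl(n) \supset \gl(n-1)$ branching rule. By the inductive hypothesis, the span of tableaux with fixed $(n-1)$-th row is, under the displayed formulas for $E_{k,k\pm 1}$ and $E_{k,k}$ with $k \leq n-1$, isomorphic to the irreducible $\gl(n-1,\CC)$-module $V(\nu)$, and the eigenvalue formula $c_{k,i} T(v) = \gamma_{k,i}(v) T(v)$ holds for all $(k,i)$ with $k \leq n-1$ (noting that $\gamma_{k,i}(v)$ depends only on the $k$-th row of $v$).

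It remains to verify that the formulas for $E_{n,n-1}$, $E_{n-1,n}$ and $E_{n,n}$ extend this to a $\gl(n,\CC)$-action, and then to establish the eigenvalue formula for $k = n$. Compatibility of the new generators with $\gl(n-2,\CC)$ is automatic since $E_{n,n\pm 1}$ modify only the $(n-1)$-th row. The relations $[E_{n-2,n-1}, E_{n-1,n}]$ and its dual, the diagonal relations $[E_{n,n}, E_{n,n\pm 1}] = \pm E_{n,n\pm 1}$, and the Serre relations all follow from direct manipulation of the displayed formulas. The main obstacle is the key identity
\begin{align*}
[E_{n-1,n}, E_{n,n-1}] T(v) = (E_{n-1,n-1} - E_{n,n}) T(v);
\end{align*}
after expansion, matching the coefficients of $T(v)$ and $T(v + \delta^{n-1,i} - \delta^{n-1,j})$ reduces this to a rational function identity in the entries of the $(n-1)$-th and $n$-th rows. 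The cleanest verification is via partial fractions: both sides are symmetric rational functions in $v_{n-1,1}, \ldots, v_{n-1,n-1}$, regular away from the discriminant hyperplanes $v_{n-1,i} = v_{n-1,j}$, and comparison of residues at these hyperplanes with behavior at infinity forces equality.

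For the remaining eigenvalue claim $c_{n,i} T(v) = \gamma_{n,i}(v) T(v)$, I use that $c_{n,i}$ is central in $U_n$ and hence acts by a scalar on the (now established) irreducible $\gl(n,\CC)$-module $V(\lambda)$. Evaluating on the highest weight tableau --- the unique standard tableau with $v_{k,j} = \lambda_j - (j-1)$ for every $k \geq j$ --- and invoking Zhelobenko's identification $\iota(c_{n,i}) = \gamma_{n,i}$ in $\CC[X_\mu]^{S_\mu}$ recovers the scalar $\gamma_{n,i}(v)$. This closes the induction.
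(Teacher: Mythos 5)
The paper does not prove this theorem; it is cited as classical (Gelfand--Tsetlin for the module structure, Zhelobenko for the $\Gamma$-eigenvalue formula), so there is no proof in the paper to compare against. Your inductive-via-branching strategy is the standard direct-verification route, and the branching observation (standard $(n-1)$-th rows correspond to dominant weights of $\gl(n-1)$ interlacing $\lambda$) is correct. But as written the argument has genuine gaps.

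The most serious one concerns the $\Gamma$-eigenvalue formula at level $n$. You invoke that $c_{n,i}$ is central and hence acts by a scalar on the ``(now established) irreducible'' module $V(\lambda)$, but irreducibility is nowhere established: the theorem as stated only asserts the existence of the module structure (the paper mentions irreducibility only in the discussion afterwards), and nothing in your argument proves it. For the scalar-action conclusion you would at least need $V(\lambda)$ to be cyclic over its highest-weight tableau, which requires a separate combinatorial argument with the lowering operators. Relatedly, your inductive hypothesis silently strengthens the statement by assuming irreducibility of $V(\nu)$ for $\gl(n-1)$, so the induction is not closed as stated. Even granting cyclicity, ``invoking Zhelobenko's identification'' to read off the scalar conflates the abstract isomorphism $\iota$ with the Harish--Chandra computation of the central character on the highest-weight vector; that computation is the actual content that must be carried out or cited precisely.

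The second gap is that the verification of the commutation relations is asserted but not performed. The crucial identity $[E_{n-1,n},E_{n,n-1}]=E_{n-1,n-1}-E_{n,n}$ requires showing both the off-diagonal cancellation $e^{-}_{n-1,i}(v)\,e^{+}_{n-1,j}(v-\delta^{n-1,i}) = e^{+}_{n-1,j}(v)\,e^{-}_{n-1,i}(v+\delta^{n-1,j})$ for $i\neq j$ and a nontrivial telescoping identity for the diagonal term; the proposal only names a method. Moreover the verification must cope with the convention that non-standard tableaux are set to zero: the relevant GT coefficient does \emph{not} always vanish when $T(v\pm\delta^{k,i})$ leaves the standard region (for instance $e^+_{k,i}(v)\neq 0$ when the broken inequality involves the $(k-1)$-th row), so one must check that the dropped terms never disrupt a cancellation. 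This does in fact work -- a violated inequality at position $(k,i)$ cannot be repaired by a further shift at $(k,j)$ with $j\neq i$, so the dangerous off-diagonal targets are themselves non-standard -- but it is a step that needs to be said, not left implicit.
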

The $U$-module $V(\lambda)$ is an irreducible finite dimensional 
representation of maximal weight $\lambda$, so this theorem provides an 
explicit presentation of all finite dimensional simple $\gl(n,\CC)$-modules. 
The last statement of the theorem, giving the action of the generators of
$\Gamma$ on $V(\lambda)$, is due to Zhelobenko.

\paragraph
Given $1 \leq i \leq k < n$ we set
\begin{align*}
e_{k,i}^+ &=  \frac{
    \prod_{j=1}^{k+1} (x_{k,i} - x_{k+1, j})}
    {\prod_{j \neq i}(x_{k,i} - x_{k,j})},&
e_{k,i}^- &= \frac{
    \prod_{j=1}^{k-1} (x_{k,i} - x_{k-1,j})}
    {\prod_{j\neq i}(x_{k,i} - x_{k,j})},
\end{align*}
which are elements of $\CC(X_\mu)$. These are the rational functions that 
appear in Gelfand and Tsetlin's presentation of finite dimensional irreducible 
$U$-modules, and we refer to them as the Gelfand-Tsetlin functions. 

A $\mu$-point $v$, or equivalently the corresponding GT tableaux, is called 
\emph{generic} if $v_{k,i} - v_{k,j} \notin \ZZ$ for all $1 \leq i < j \leq k 
< n$, otherwise it is called \emph{singular}. Notice that a generic tableau 
may have entries in the top row whose difference is an integer. 

Let $\ZZ^\mu_0$ be the set of all integral $\mu$-points with $z_{n,i} = 0$ for 
all $1 \leq i \leq n$ (i.e., the corresponding GT tableau has its top row 
filled with zeros). If $v$ is generic then the set $\{v+z \mid z \in \ZZ^\mu_0
\}$ contains no poles of the Gelfand-Tsetlin functions. This idea was used by
Drozd, Futorny and Ovsienko in \cite{DFO-GT-modules} to give a $U$-module 
structure to the vector-space
\begin{align*}
V(T(v)) &= \langle T(v+z) \mid z \in \ZZ^\mu_0\rangle_\CC.
\end{align*}
Since the action of $U$ is given by the Gelfand-Tsetlin functions, each 
tableau $T(v)$ is an eigenvector of $\Gamma$ and hence $V(T(v))$ is a
GT module with support $\{\chi_{v+z} \mid z \in \ZZ^\mu_0\}$ and all 
multiplicities equal to $1$. 

\paragraph
\label{big-module}
The main goal of this article is to give a nontrivial $U$-module structure to 
the space $V(T(v))$ for an arbitrary $\mu$-point $v$. The approach from 
\cite{DFO-GT-modules} breaks down if $v$ is not generic, but Futorny, 
Grantcharov and the first named author extended this construction to a subset 
of singular characters in \cites{FGR-1-singular, FGR-2-index}, and in this
article we extend their work to arbitrary $\mu$-points. In order to do this, 
we introduce a large $U$-module which serves as a formal model of a GT module.

Set $K = \CC(X_\mu)$ to be the field of rational functions over $\mu$-points.
As mentioned above $e_{k,i}^\pm \in K$ for all $1 \leq i \leq k <n$. We set
$V_\CC$ to be the $\CC$-vector-space with basis $\left\{T(z) \mid z \in 
\ZZ^\mu_0 \right\}$, and $V_K = K \ot_\CC V_\CC$. Since $S_\mu$ acts naturally
on both $K$ and $V_\CC$, it acts on $V_K$ by the diagonal action, i.e. given 
$\sigma \in S_\mu, f \in K$ and $z \in \ZZ^\mu_0$ the action is the linear extension of $\sigma \cdot f \ot T(z) = \sigma(f) \ot T(\sigma(z))$. The group
$\ZZ^\mu_0$ also acts on $\CC[X_\mu]$, with $\delta^{k,i} \cdot x_{l,j} = 
x_{l,j} + \delta_{k,l}\delta_{i,j}$. This action extends to $K$, and given
$f \in K, z \in \ZZ^\mu_0$ we sometimes write $f(x+z)$ instead of $z \cdot f$.
The actions of $S_\mu$ and $\ZZ^\mu_0$ do not commute, since $\sigma(z\cdot f)
= \sigma(z) \cdot \sigma(f)$.

\begin{Proposition*}
The vector-space $V_K$ has an $S_\mu$-equivariant $U$-module structure, with 
the action of the generators given by
\begin{align*}
E_{k,k+1} T(z) 
  &= - \sum_{i=1}^k e^+_{k,i}(x+z) T(z + \delta^{k,i}); \\
E_{k+1,k} T(z) 
  &= \sum_{i=1}^k e^-_{k,i}(x+z) T(z - \delta^{k,i}); \\
E_{k,k} T(z)
  &=  \left(
    \sum_{j=1}^k (x_{k,j} + z_{k,j}) - 
    \sum_{j=1}^{k-1} (x_{k-1,j} + z_{k-1,j}) + k -1 \right) 
    T(z).
\end{align*}
Furthermore, for each $c \in \Gamma$ we have $c T(z) = \iota(c)(x +z) T(z)$. 
\end{Proposition*}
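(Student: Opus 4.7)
The strategy is to reduce every required identity to a rational-function identity in $K$ and then verify those identities by Zariski density, using the classical and generic Gelfand-Tsetlin constructions as specialization data. Concretely, applying any relation $R = 0$ of $U$ to a basis element $T(z)$ via the proposed formulas produces a finite sum $\sum_w f_w(x)\, T(z+w)$ with each $f_w \in K$, and it suffices to show that each $f_w$ vanishes identically. The rational functions $e^\pm_{k,i}(x+z)$ are regular at every generic $\mu$-point in the sense of \cite{DFO-GT-modules}, and also at every dominant integral standard tableau, so either of these Zariski-dense families of specializations is available to us.

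First I would verify the $U(\gl(n,\CC))$-relations. Specializing $x$ at a generic $v \in \CC^\mu$ turns the proposed formulas into those of the generic Gelfand-Tsetlin module $V(T(v))$ constructed in \cite{DFO-GT-modules} and recalled in \ref{big-module}, where the $U$-relations are already known to hold; hence $f_w(v) = 0$ for every generic $v$, and since the locus of generic points is dense in $\CC^\mu$, we conclude $f_w \equiv 0$ in $K$. (One could alternatively specialize along dominant integral weights to reduce directly to the classical theorem recalled in \ref{T:gt-tableaux}.) The same density argument gives the formula $c\, T(z) = \iota(c)(x+z)\, T(z)$ for $c \in \Gamma$: this is the rational-function lift of Zhelobenko's statement from \ref{T:gt-tableaux}, valid after any standard or generic specialization.

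For $S_\mu$-equivariance a direct symbolic check shows that for each simple transposition $s_i^{(k)} \in S_\mu$ one has $s_i^{(k)}(e^\pm_{k,i}) = e^\pm_{k,i+1}$ and $s_i^{(k)}(e^\pm_{k,j}) = e^\pm_{k,j}$ for $j \notin \{i, i+1\}$; this follows from the symmetry of the denominator $\prod_{l \neq i}(x_{k,i} - x_{k,l})$ and the explicit form of the numerator. Combined with the compatibility $\sigma(x+z) = \sigma(x) + \sigma(z)$ and $\sigma(\delta^{k,i}) = \delta^{k,\sigma(i)}$, the identity $\sigma \cdot (E \cdot T(z)) = E \cdot (\sigma \cdot T(z))$ for each generator $E$ reduces to a reindexing of the sum. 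The main technical point, and the step I expect to be the most delicate, is setting up the specialization map $V_K \to V(T(v))$ cleanly: one must check that at generic (or standard) $v$ every coefficient appearing in the action of any particular element of $U$ is regular at $v$, and that the evident identification $T(z) \mapsto T(v+z)$ intertwines the specialized action with the action from \cite{DFO-GT-modules} or \cite{GT-modules}. Once this is in place, the $U$-module axioms, the $S_\mu$-equivariance, and the $\Gamma$-formula for $V_K$ are all inherited from the known cases by Zariski density.
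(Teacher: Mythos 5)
Your proposal is correct and, in spirit, follows the standard strategy used throughout this literature. The paper's own proof of the $U$-module structure simply defers to \cite{Zad-1-sing}*{Proposition 1}, which establishes the same fact for the \bigmodule{} by the same kind of specialization reasoning; your write-up is a self-contained version of that. The $S_\mu$-equivariance computation you sketch (that $s_i^{(k)}$ permutes the GT functions by $s_i^{(k)}(e^\pm_{k,i}) = e^\pm_{k,i+1}$ and fixes the others, plus compatibility of the $S_\mu$- and $\ZZ^\mu_0$-actions) is exactly what the paper records and then declares ``easy to check.'' Your observation that the $\Gamma$-formula reduces by density to Zhelobenko's statement is also the right mechanism, and it fills in a detail the paper leaves implicit.

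One caveat worth flagging: your parenthetical fallback of specializing at dominant integral \emph{standard} tableaux to invoke the classical theorem of \ref{T:gt-tableaux} is not as clean as the generic route. In the classical setting the $U$-module structure relies on the convention that $T(w) = 0$ for non-standard $w$; when a coefficient such as $e^+_{k,j}(v+\delta^{k,i})$ develops a pole at a standard $v$, the tableau it multiplies is always non-standard and the product is discarded rather than evaluated. So the ``coefficients'' you would want to extract from iterated actions need not be regular at standard $v$, and the naive reading ``$f_w(v)=0$ for all standard $v$'' is not literally what the classical theorem gives. The generic locus, by contrast, is an actual Zariski-dense open set on which every $e^\pm_{k,i}(x+z)$ is regular, and the specialization $T(z)\mapsto T(v+z)$ there is an honest isomorphism onto the module of \cite{DFO-GT-modules}. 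So keep the generic specialization as the load-bearing argument (as you already do) and drop or heavily qualify the standard-tableau alternative.
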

\begin{proof}
The proof that $V_K$ is a $U$-module is identical to the proof of
\cite{Zad-1-sing}*{Proposition 1}. It follows from the definitions that 
$\sigma \cdot e^{\pm}_{k,i} = e^\pm_{k,\sigma^{(k)}(i)}$, and using this it
is easy to check that the canonical generators of $U$ act by 
$S_\mu$-equivariant operators.
\end{proof}
We will refer to the $U$-module $V_K$ as the ``\bigmodule''. The \bigmodule 
was introduced in \cite{Zad-1-sing}, where it was proved that certain 
sublattices can serve as universal models for modules with generic or 
$1$-singular characters ($1$-singular here means that there is at most one
pair of entries in the same $\mu$-block differing by an integer). In the
following sections of this article we will extend this argument to arbitrary
characters without any restriction. 

\section{Symmetrized divided difference operators}
\label{SDD-OPERATORS}
In this section we recall some results on divided difference operators, and
introduce a symmetrized version of them. These operators will play a central 
role in the construction of the sublattices of the \bigmodule. 

\paragraph
\label{sdd-intro}
Throughout this section we fix $m \in \NN$ and $\eta = (\eta_1, \ldots, 
\eta_r)$ a composition of $m$. Recall that $\CC[X_\eta] = \CC[x_{k,i} \mid
(k,i) \in \Sigma(\eta)]$, on which $S_\eta$ acts by $\sigma(x_{k,i}) = 
x_{\sigma\cdot(k,i)}$. We set $F = \CC(X_\eta)$, the fraction field of 
$\CC[X_\eta]$, with its obvious $S_\eta$-action. 

Set 
\begin{align*}
\Delta_k 
  &= \prod_{1 \leq i < j \leq \eta_k} (x_{k,i} - x_{k,j}),
  &\Delta_\eta 
  &= \prod_{k=1}^r \Delta_k.
\end{align*}
If $f \in \CC[X_\eta]$ is a polynomial such that $\sigma(f) = \sg(\sigma) f$ 
for all $\sigma \in S_\eta$ then $f = g \Delta_\eta$, with $g$ an 
$S_\eta$-invariant polynomial. 

\paragraph
\label{ddoperators}
\textbf{Divided difference operators.} 
Since the action of $S_\eta$ extends to $F$ we can form the smash product 
$F \# S_\eta$, which is the $\CC$-algebra whose underlying vector-space 
is $F \ot \CC[S_\eta]$ and product given by $(f \ot \sigma)\cdot(g \ot \tau) 
= f\sigma(g) \ot \sigma \tau$ for all $f,g \in F$ and all $\sigma, \tau \in 
S_\eta$. We will usually ommit the tensor product symbol when writing elements 
in $F \# S_\eta$, so $f\sigma$ stands for $f \ot \sigma$. We must 
be careful to distinguish the action of $\sigma \in S_\eta$ on a rational 
function $f \in F$, denoted by $\sigma(f)$, from their product in 
$F \# S_\eta$, which is $\sigma \cdot f = \sigma(f) \sigma$.

Recall that for each $(k,i) \in \Sigma(\eta)$ we denote by $s_i^{(k)}$ the 
unique simple transposition in $S_\eta$ which acts on $\Sigma(\eta)$ by 
interchaging $(k,i)$ and $(k,i+1)$, while leaving the other elements fixed. 
The divided difference associated to $s_i^{(k)}$ is $\partial_{i}^{(k)} = 
\frac{1}{x_{k,i} - x_{k,i+1}} (\id - s_i^{(k)}) \in F \# S_\eta$. These 
elements satisfy the relations
\begin{align*}
(\partial_{i}^{(k)})^2 &= 0 \\
\partial_i^{(k)} \partial_j^{(l)} 
  &= \partial_j^{(l)}\partial_i^{(k)}
  & \mbox{ if } l \neq k \mbox{ or } |i-j|>1; \\
\partial_i^{(k)} \partial_{i+1}^{(k)} \partial_i^{(k)} 
  &=\partial_{i+1}^{(k)} \partial_i^{(k)} \partial_{i+1}^{(k)}
  & \mbox{ for } 1 \leq i \leq k -2.
\end{align*}
Let $\sigma = \sigma^{(1)} \cdots \sigma^{(n)} \in S_\eta$. For each $k \in 
\interval{n}$ we can write $\sigma^{(k)}$ as a reduced composition 
$s_{i_1}^{(k)} s_{i_2}^{(k)} \cdots s_{i_{l}}^{(k)}$, with $\ell(\sigma^{(k)}) 
= l$. We set $\partial_\sigma^{(k)} = \partial_{i_1}^{(k)} \cdot 
\partial_{i_2}^{(k)} \cdots \partial_{i_l}^{(k)}$, and 
$\partial_\sigma = \partial_\sigma^{(1)} \cdot \partial_\sigma^{(2)} 
\cdots\partial_\sigma^{(n)}$; the relations among the divided difference 
operators imply that $\partial_\sigma^{(k)}$, and hence $\partial_\sigma$, is 
independent of the reduced composition we choose for $\sigma^{(k)}$.

The equality $\ell(\sigma) + \ell(\tau) = \ell(\sigma\tau)$ holds if and only 
if the concatenation of a reduced composition of $\sigma$ with a reduced 
composition of $\tau$ is a reduced composition of $\sigma\tau$, and this 
implies that $\partial_\sigma \cdot \partial_\tau = \partial_{\sigma\tau}$.
If equality does not hold then the concatenation is not reduced, which implies
that in the product $\partial_\sigma \cdot \partial_\tau$ there must be a 
term of the form $\partial_{i}^{(k)} \cdot \partial_i^{(k)} = 0$. Thus for 
all $\sigma, \tau \in S_\eta$ we get.
\begin{align*}
\partial_\sigma \cdot \partial_\tau &= 
  \begin{cases}
  \partial_{\sigma \tau} &\mbox{ if } \ell(\sigma) + \ell(\tau) 
    = \ell(\sigma\tau); \\
  0 & \mbox{otherwise.}
  \end{cases}
\end{align*}

\paragraph
\label{L:dd-algebra}
Divided differences are usually defined as operators on the polynomial algebra 
$\CC[X_\eta]$, although they make sense over any $F$-vector-space with an
equivariant $S_\eta$-action. Since these operators play an important role in 
the sequel, we gather some of their basic properties in the following lemma.
\begin{Lemma*}
Let $w_\eta$ be the longest word in $S_\eta$. The following equalities hold in 
$F\# S_\eta$.
\begin{enumerate}
\item 
\label{i:dd-b}
$\partial_{i}^{(k)} \cdot f = s_i^{(k)}(f)\partial_i^{(k)} + \partial_i^{(k)}(f)$ for all $(k,i) \in \Sigma(\eta), f \in F$.

\item 
\label{i:dd-c}
$\partial_{w_\eta} \cdot f  \partial_\sigma = \partial_{w_\eta} \cdot 
\partial_{\sigma^{-1}}(f)$ for all $\sigma \in S_\eta, f \in F$.

\item 
\label{i:dd-d}
$\frac{1}{\eta!}\partial_{w_\eta} = \frac{1}{\Delta_\eta} \asym_\eta
= \sym_\eta \cdot \frac{1}{\Delta_\eta}$.
\end{enumerate}
\end{Lemma*}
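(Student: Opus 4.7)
The plan is to prove the three statements in sequence, each using the previous. For (\ref{i:dd-b}) I expand $\partial_i^{(k)} \cdot f = (x_{k,i}-x_{k,i+1})^{-1}(\id - s_i^{(k)}) \cdot f$ using the smash-product relation $s_i^{(k)} \cdot f = s_i^{(k)}(f) \cdot s_i^{(k)}$, which gives $(x_{k,i}-x_{k,i+1})^{-1}(f - s_i^{(k)}(f)\cdot s_i^{(k)})$; splitting $f - s_i^{(k)}(f)\cdot s_i^{(k)} = (f - s_i^{(k)}(f)) + s_i^{(k)}(f)(\id - s_i^{(k)})$ yields exactly $\partial_i^{(k)}(f) + s_i^{(k)}(f)\partial_i^{(k)}$.

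For (\ref{i:dd-c}), I induct on $\ell(\sigma)$; the base $\sigma = \id$ is trivial. For the inductive step, factor $\sigma = s\sigma'$ with $s$ a simple reflection and $\ell(\sigma') = \ell(\sigma)-1$, so $\partial_\sigma = \partial_s \partial_{\sigma'}$. By (\ref{i:dd-b}), $f\partial_s = \partial_s\cdot s(f) + \partial_s(f)$, and left multiplication by $\partial_{w_\eta}$ kills the first summand because $\partial_{w_\eta} \partial_s = 0$ (the multiplication rule in \ref{ddoperators} gives this since $\ell(w_\eta s) = \ell(w_\eta)-1 < \ell(w_\eta)+1$). What remains is $\partial_{w_\eta} \cdot \partial_s(f) \cdot \partial_{\sigma'}$, to which the inductive hypothesis applies, producing $\partial_{w_\eta} \cdot \partial_{(\sigma')^{-1}}(\partial_s(f)) = \partial_{w_\eta} \cdot \partial_{\sigma^{-1}}(f)$, the last equality because $\sigma^{-1} = (\sigma')^{-1} s$ is a reduced decomposition, forcing $\partial_{(\sigma')^{-1}} \partial_s = \partial_{\sigma^{-1}}$ as operators on $F$.

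For (\ref{i:dd-d}), the key intermediate identity is $\partial_{w_\eta} \cdot \sigma = \sg(\sigma)\partial_{w_\eta}$ for every $\sigma \in S_\eta$; it suffices to check this on simple reflections $s = s_i^{(k)}$, which follows by writing $s = \id - (x_{k,i}-x_{k,i+1})\partial_i^{(k)}$ and invoking (\ref{i:dd-c}) with $f = x_{k,i}-x_{k,i+1}$, $\sigma = s_i^{(k)}$: since $\partial_{s_i^{(k)}}(x_{k,i}-x_{k,i+1}) = 2$ one obtains $\partial_{w_\eta}(x_{k,i}-x_{k,i+1})\partial_i^{(k)} = 2\partial_{w_\eta}$, hence $\partial_{w_\eta} \cdot s = -\partial_{w_\eta}$. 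Expanding $\partial_{w_\eta} = \sum_\sigma f_\sigma \sigma$ in $F\#S_\eta$ and comparing coefficients forces $f_\sigma = \sg(\sigma) f_{\id}$, so $\partial_{w_\eta} = \eta!\,f_{\id}\,\asym_\eta$. To pin down $f_{\id}$ I apply both sides as operators to $\Delta_\eta$: using $\sigma(\Delta_\eta) = \sg(\sigma)\Delta_\eta$ the right-hand side gives $\eta!\,f_{\id}\,\Delta_\eta$, while the classical identity $\partial_{w_\eta}(\Delta_\eta) = \eta!$---proved separately by induction on $\ell(w_\eta)$ via the Leibniz rule $\partial_s(fg) = \partial_s(f) g + s(f)\partial_s(g)$, a consequence of (\ref{i:dd-b})---yields $\eta!$ on the left, forcing $f_{\id} = 1/\Delta_\eta$. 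The second equality is a one-line calculation: in $F\#S_\eta$ one has $\sigma \cdot \frac{1}{\Delta_\eta} = \frac{\sg(\sigma)}{\Delta_\eta}\sigma$, so $\sym_\eta \cdot \frac{1}{\Delta_\eta} = \frac{1}{\eta!\Delta_\eta}\sum_\sigma \sg(\sigma)\sigma = \frac{1}{\Delta_\eta}\asym_\eta$.

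The main obstacle is the inductive step in (\ref{i:dd-c})---the bookkeeping with reduced factorizations and the length-additivity that governs divided-difference composition needs care---together with the identification $f_{\id} = 1/\Delta_\eta$ in (\ref{i:dd-d}), which rests on the separate classical identity $\partial_{w_\eta}(\Delta_\eta) = \eta!$. Everything else reduces to direct manipulation in the smash product.
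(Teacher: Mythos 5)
Your proof is correct and follows essentially the same route as the paper's: item (1) by direct expansion in the smash product, item (2) by checking the simple-reflection case via vanishing of $\partial_{w_\eta}\partial_s$ and then inducting on length, and item (3) by establishing the antisymmetry relation $\partial_{w_\eta}\cdot\sigma = \sg(\sigma)\partial_{w_\eta}$, expanding $\partial_{w_\eta}$ in $F\#S_\eta$, and evaluating on $\Delta_\eta$ to pin down the coefficient. The only cosmetic differences are that you derive $\partial_{w_\eta}\cdot s = -\partial_{w_\eta}$ through item (2) rather than directly from $\partial_{w_\eta} = \partial_{w_\eta s}\partial_s$ and $\partial_s\cdot s = -\partial_s$ as the paper does, and you identify $f_{\id}=1/\Delta_\eta$ by evaluating both sides on $\Delta_\eta$ in one step rather than first arguing that $f_{\id}$ is a scalar multiple of $\Delta_\eta$ by antisymmetry.
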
 
\begin{proof}
Item \ref{i:dd-b} is an easy computation following from the definition. Now set
$s = s_i^{(k)}$ and $\partial_s = \partial_{i}^{(k)}$. By definition 
$\partial_s \cdot s = - \partial_s$ and $s \cdot \partial_s = \partial_s$, and
since $w_\eta = (w_\eta s) s$ with $\ell(w_\eta s) = \ell(w_\eta) - 1$ we get
\begin{align*}
\partial_{w_\eta} \cdot s 
  &= \partial_{w_\eta s} \cdot \partial_s \cdot s
  = - \partial_{w_\eta s} \cdot \partial_s = - \partial_{w_\eta}.
\end{align*}
This along with the previous item gives
\begin{align*}
0 
  &= \partial_{w_\eta} \cdot (\partial_s \cdot f)
  = \partial_{w_\eta} \cdot s(f) \partial_s + \partial_{w_\eta} \cdot 
    \partial_s(f)\\
  &= \partial_{w_\eta} \cdot (s \cdot f s) \cdot \partial_s + 
    \partial_{w_\eta} \cdot \partial_s(f)
  = - \partial_{w_\eta} \cdot f \partial_s + \partial_{w_\eta} \cdot 
    \partial_s(f)
\end{align*}
which proves that item \ref{i:dd-c} holds for $\sigma = s$. Since $k$ and $i$
are arbitrary, the general case follows by induction on the length of $\sigma$.

A similar argument as above shows that $\sigma \cdot \partial_{w_\eta} = 
\partial_{w_\eta}$ for all $\sigma \in S_\eta$. Induction on the length 
of $\sigma$ shows that $\partial_\sigma = \sum_{\tau \in S_\eta} 
\frac{1}{f_{\tau,\sigma}} \tau$ with $f_{\tau,\sigma} \in \CC[X_\eta]$ 
homogeneous of degree $\ell(\sigma)$. If we put $f_\tau = f_{\tau, w_\eta}$, 
the equalities $\sigma \cdot \partial_{w_\eta} = \partial_{w_\eta}$ and 
$\partial_{w_\eta} \cdot \sigma = \sg(\sigma) \partial_{w_\eta}$ imply that 
$f_\sigma = \sigma(f_e) = \sg(\sigma) f_e$, so $f_e$ is an 
$S_\eta$-antisymmetric polynomial of degree $\ell(w_\eta)$, i.e. a scalar 
multiple of $\Delta_\eta$. Now $\partial_{w_\eta}(\Delta_\eta) = \eta!$, so 
$f_e = \frac{\eta!}{\Delta_\eta}$ and
\begin{align*}
\partial_{w_\eta} 
  &= \frac{\eta! }{\Delta_\eta} \sum_{\sigma \in S_\eta} \sg(\sigma) \sigma
  = \eta! \sum_{\sigma \in S_\eta} \sigma \cdot \frac{1}{\Delta_\eta}.
\end{align*}
This completes the proof of item \ref{i:dd-d}.
\end{proof}
Many subalgebras of $F$ are stable by the action of divided differences.
It is a well-known fact that this is the case for $\CC[X_\eta]$. Assume 
$\CC[X_\eta] \subset A \subset F$ is closed under the action of $S_\eta$. 
Then any rational function in $A$ can be written as a quotient $p/q$ with 
$p,q \in \CC[X_\eta]$ and $q$ $S_\eta$-invariant, so $\partial_\sigma(f) = 
\partial_\sigma(p/q) = \partial_\sigma(p)/q \in A$ for each $\sigma \in 
S_\eta$ and $A$ is also closed under divided differences.

\paragraph
\label{polynomial-dd}
We focus now on the action of divided differences over the polynomial ring 
$\CC[X_\eta]$. First, for every $\sigma \in S_\eta$ the operator
$\partial_\sigma: \CC[X_\eta] \to \CC[X_\eta]$ is homogeneous of degree 
$-\ell(\sigma)$. Now let $\p_\eta$ be the ideal generated by $\{x_{k,i} 
- x_{k,j} \mid (k,i),(k,j) \in \Sigma(\eta)\}$, and let $\CC[\p_\eta]
\subset \CC[X_\eta]$ be the algebra generated by $\p_\eta$; clearly 
$\Delta_\eta
\in \p_\eta$. Since $\partial_i^{(k)} (x_{l,j} - x_{l,t}) \in \ZZ$, the 
algebra $\CC[\p_\eta]$ is stable by the action of divided differences. It 
follows that $\partial_\sigma \Delta_\eta \in \p_\eta$ if $\ell(\sigma) <
\deg \Delta_\eta = \eta!$, while $\partial_{w_\eta} \Delta_\eta = \eta!$.

Let $\sigma, \tau \in S_\eta$. By item \ref{i:dd-c} of Lemma 
\ref{L:dd-algebra} we have 
\begin{align*}
\frac{1}{\eta!}\partial_{w_\eta}
  ((\partial_{\tau} \Delta_\eta) (\partial_\sigma \Delta_\eta))
  &= \frac{1}{\eta!}
    \partial_{w_\eta}(\Delta (\partial_{\tau^{-1}} \partial_\sigma \Delta_\eta))
  = \sym_\eta(\partial_{\tau^{-1}} \partial_\sigma \Delta_\eta).
\end{align*}
Now if $\ell(\tau^{-1}) + \ell(\sigma) \geq \ell(w_\eta)$ this is $0$ unless
$\tau^{-1}\sigma = w_\eta$, in which case it equals $\eta!$. If 
$\ell(\tau^{-1}) + \ell(\sigma) < \ell(w_\eta)$ then the best we can say is 
that this element lies in $\p_\eta^{S_\eta}$. 

Consider a total order $<$ on $S_\eta$ such that $\ell(\sigma) < \ell(\tau)$
implies $\sigma < \tau$, and use this to index the rows and columns of 
matrices in $M_{\eta!}(F)$ by elements in $S_\eta$. For each 
$X \in M_{\eta!}(F)$ write $X^\sigma_\tau$ for the entry in the $\sigma$-th 
row and the $\tau$-th column, so if $Y$ is another matrix then 
$(XY)^\sigma_\tau = \sum_{\rho \in S_\eta} X^\sigma_\rho Y^\rho_\tau$.
Consider the matrices $X,Y \in M_{\eta!}(F)$ defined by $X^\sigma_\tau 
= \tau \left( \frac{\partial_\sigma \Delta_\eta}{\eta !\Delta_\eta} \right)$ 
and $Y^\rho_\nu = \frac{1}{\eta!}\rho(\partial_{\nu w_\eta} \Delta_\eta)$. 
Then 
\begin{align*}
(XY)^\sigma_\nu
  &= \frac{1}{\eta!^2} \sum_{\tau \in S_\eta} 
  \tau \left(
    \frac{(\partial_\sigma \Delta_\eta)(\partial_{\nu w_\eta} \Delta_\eta)}
    {\Delta_\eta}
  \right) 
  = \frac{1}{\eta!^2} \partial_{w_\eta} ((\partial_\sigma \Delta_\eta) 
    (\partial_{\nu w_\eta} \Delta_\eta)).
\end{align*}
By the previous discussion $XY$ is an upper triangular matrix with ones in
the diagonal, and the nonzero elements in the upper-triangular part lie in 
$\p^{S_\eta}$. From this we deduce that $X$ is invertible, and its inverse is 
of the form $Y + C$ with $C$ a matrix with entries in the ideal generated by 
$\p^{S_\eta}$. 

The rational function $X^\sigma_\tau$ is homogeneous of degree $-\ell(\sigma)$,
hence its determinant is also homogeneous of degree $-\sum_{\sigma \in S_\eta}
\ell(\sigma) = -\eta!$. This implies that $\deg (Y+C)^\rho_\nu = \eta! + 
\ell(\nu) - \eta! = \ell(\nu) = \deg Y^\rho_\nu$, so the entries of $C$ are
also homogeneous polynomials. Also since $X^\sigma_\tau = \tau(X_{\id}^\sigma)$
and $Y^{\rho}_\nu = \rho(Y^{\id}_\nu)$, we must have $C^\rho_\nu = 
\rho(C^{\id}_\nu)$. We summarize these results in the following lemma. 
\begin{Lemma*}
For each $\sigma \in S_\eta$ set $(\partial_\sigma \Delta)^* = 
(Y + C)^\id_{\sigma^{-1}}$, and let $I$ be the ideal generated by 
$\p_\eta^{S_\eta}$ in $\CC[X_\eta]$. Then $(\partial_\sigma \Delta)^*$
is a homogeneous polynomial of degree $\ell(\sigma)$, and
$(\partial_\sigma \Delta)^* \equiv \frac{1}{\eta!} 
\partial_{\sigma^{-1}w_\eta} \Delta_\eta \mod I$. Furthermore
$(Y + C)^\tau_{\sigma^{-1}} = \tau((\partial_\sigma \Delta)^*)$
for each $\tau \in S_\eta$.
\end{Lemma*}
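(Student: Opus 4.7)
The plan is to treat the lemma as a bookkeeping exercise that packages the matrix identity $X^{-1} = Y + C$ whose rough shape was established in the preceding paragraph. I would write $X^{-1}$ as an explicit Neumann series and then read off each of the three claimed properties from the entries.

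First I would reuse the preceding computation: setting $N = XY - \mathbf{1}$, the formula
\[
(XY)^\sigma_\nu = \frac{1}{\eta!^2}\partial_{w_\eta}\bigl((\partial_\sigma\Delta_\eta)(\partial_{\nu w_\eta}\Delta_\eta)\bigr)
\]
together with the vanishing/invariance argument already given shows that $N$ vanishes whenever $\ell(\sigma) \geq \ell(\nu)$, and otherwise $N^\sigma_\nu \in \p_\eta^{S_\eta}$. Ordering $S_\eta$ by any refinement of the length order makes $N$ strictly upper triangular, hence nilpotent, and the resulting finite geometric series gives $X^{-1} = Y(\mathbf{1} - N + N^2 - \cdots)$. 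Defining $C = X^{-1} - Y = -Y(N - N^2 + \cdots)$, each $N^k$ and each entry of $Y$ is a polynomial, so $C$ has polynomial entries as well.

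The first two claims then follow by entry inspection. For the congruence modulo $I$, every entry of $N$ lies in $\p_\eta^{S_\eta} \subseteq I$, so every entry of $N - N^2 + \cdots$ lies in $I$, and hence $C^{\id}_{\sigma^{-1}} \in I$; this yields
\[
(\partial_\sigma \Delta)^* = Y^{\id}_{\sigma^{-1}} + C^{\id}_{\sigma^{-1}} \equiv \tfrac{1}{\eta!}\partial_{\sigma^{-1}w_\eta}\Delta_\eta \mod I.
\]
For homogeneity, $X^\sigma_\tau$ has degree $-\ell(\sigma)$ and $Y^\rho_\nu$ has degree $\ell(\nu)$, so a direct degree count shows that $(\mathbf{1}+N)^{-1}$ preserves column degrees and $(X^{-1})^\rho_\nu$ has degree $\ell(\nu)$; specialising to $\rho = \id$ and $\nu = \sigma^{-1}$ gives the required degree $\ell(\sigma)$.

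The delicate step, and where I would focus the most care, is the equivariance $(Y+C)^\tau_{\sigma^{-1}} = \tau((\partial_\sigma\Delta)^*)$, which reduces to $C^\tau_{\sigma^{-1}} = \tau(C^{\id}_{\sigma^{-1}})$. The key observation is that every entry of $N$ is $S_\eta$-invariant, either because $N^\sigma_\nu \in \p_\eta^{S_\eta}$, or more conceptually because item \ref{i:dd-d} of Lemma \ref{L:dd-algebra} factors $\frac{1}{\eta!}\partial_{w_\eta}$ as $\sym_\eta \circ (1/\Delta_\eta)$, so that $\partial_{w_\eta}$ always lands in $F^{S_\eta}$. Inductively every entry of each $N^k$, and hence of $\mathbf{1} - N + N^2 - \cdots$, is $S_\eta$-invariant. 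Using $Y^\rho_\mu = \rho(Y^{\id}_\mu)$ we then compute
\[
C^\rho_\nu = \sum_\mu Y^\rho_\mu\bigl(-N + N^2 - \cdots\bigr)^\mu_\nu = \rho\!\left(\sum_\mu Y^{\id}_\mu \bigl(-N + N^2 - \cdots\bigr)^\mu_\nu\right) = \rho(C^{\id}_\nu),
\]
which combined with the built-in equivariance of $Y$ finishes the proof.
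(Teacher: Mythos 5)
Your proposal is correct, and at the structural level it follows the same route as the paper: both start from the unipotent identity $XY = \mathbf{1} + N$ established just before the lemma, with $N$ strictly upper triangular (with respect to a length order) and with entries in $\p_\eta^{S_\eta}$, and both deduce $X^{-1} = Y(\mathbf{1}+N)^{-1}$. The difference is how much weight the Neumann series is made to carry. The paper invokes it only implicitly to get the entries of $C$ into the ideal $I$, and then switches to the determinant--adjugate formula for the degree count: $X^\sigma_\tau$ homogeneous of degree $-\ell(\sigma)$ forces $\det X$ and all cofactors homogeneous, hence $(X^{-1})^\rho_\nu$ homogeneous of degree $\ell(\nu)$, and then $C = X^{-1}-Y$ is homogeneous by subtraction. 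You instead stay with the series throughout: each $N^k$ has entries of degree $\ell(\nu)-\ell(\mu)$, so $(\mathbf{1}+N)^{-1}$ preserves column degree and $Y(\mathbf{1}+N)^{-1}$ has the same column degrees as $Y$. Both routes are valid; the adjugate argument is shorter, while yours treats polynomiality, the congruence mod $I$, and homogeneity on a single footing. The place where your write-up genuinely adds something is the equivariance $(Y+C)^\tau_{\sigma^{-1}} = \tau((\partial_\sigma\Delta)^*)$: the paper merely asserts that $X^\sigma_\tau = \tau(X^\sigma_\id)$ and $Y^\rho_\nu = \rho(Y^\id_\nu)$ ``must'' force $C^\rho_\nu = \rho(C^\id_\nu)$, whereas you correctly isolate the reason --- every entry of $N$ (and hence of $(\mathbf{1}+N)^{-1}$) is $S_\eta$-invariant, because $\frac{1}{\eta!}\partial_{w_\eta}$ factors through $\sym_\eta$ --- and this makes the row-equivariance of $X^{-1}$, hence of $C$, an immediate computation. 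That is a worthwhile clarification of a step the paper leaves to the reader.
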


\paragraph
\label{P:sdd}
There is a second family of elements in $F \# S_\eta$ that we need to 
distinguish before going on. For each $\sigma \in S_\eta$ we define the
\emph{symmetrized divided difference operator} $D_\sigma^\eta = \sym_\eta 
\cdot \partial_\sigma$. It follows from the discussion at the end of
\ref{L:dd-algebra} that if $A \subset F$ is a $\CC[X_\eta]$-algebra stable by 
the action of $S_\eta$ then $D_\sigma^\eta(A) \subset A$.

\begin{Proposition*}
For each $\sigma \in S_\eta$ we have
\begin{align*}
D_\sigma^\eta 
  &= D_{w_\eta}^\eta \cdot \partial_{\sigma^{-1}}\Delta_\eta
  = \frac{1}{\eta!}\sum_{\tau \in S_\eta} \tau \left(
    \frac{\partial_{\sigma^{-1}}\Delta_\eta}{\Delta_\eta} 
    \right) \tau,\\
\sigma
  &= \sum_{\tau \in S_\eta} 
    \sigma((\partial_\tau \Delta_\eta)^*)
    D_\tau^\eta. 
\end{align*}  
\end{Proposition*}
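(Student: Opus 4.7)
The plan is to derive both identities from the definition $D_\sigma^\eta = \sym_\eta \cdot \partial_\sigma$ by manipulating expressions inside the smash product $F \# S_\eta$, using the three items of Lemma \ref{L:dd-algebra} and the matrix inversion already established in \ref{polynomial-dd}.

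For the first display, I would first observe that $\sigma \cdot \partial_{w_\eta} = \partial_{w_\eta}$ for every $\sigma \in S_\eta$ (this is exactly the identity used in the proof of Lemma \ref{L:dd-algebra}\ref{i:dd-d}), so that $D_{w_\eta}^\eta = \sym_\eta \cdot \partial_{w_\eta} = \partial_{w_\eta}$. Then, using item \ref{i:dd-d} of Lemma \ref{L:dd-algebra} in the form $\sym_\eta = \tfrac{1}{\eta!}\partial_{w_\eta}\cdot \Delta_\eta$, I would rewrite
\[
D_\sigma^\eta = \sym_\eta\cdot \partial_\sigma = \tfrac{1}{\eta!}\, \partial_{w_\eta} \cdot \Delta_\eta\, \partial_\sigma,
\]
and apply item \ref{i:dd-c} of the same lemma with $f = \Delta_\eta$ to replace $\Delta_\eta\,\partial_\sigma$ by $\partial_{\sigma^{-1}}(\Delta_\eta)$, yielding $D_\sigma^\eta$ as (up to the normalization $\tfrac{1}{\eta!}$) $D_{w_\eta}^\eta \cdot \partial_{\sigma^{-1}}\Delta_\eta$. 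To pass from this to the explicit sum, I would then use the alternative form $\tfrac{1}{\eta!}\partial_{w_\eta} = \sym_\eta \cdot \tfrac{1}{\Delta_\eta}$ (again from \ref{L:dd-algebra}\ref{i:dd-d}) to obtain $D_\sigma^\eta = \sym_\eta \cdot \tfrac{\partial_{\sigma^{-1}}\Delta_\eta}{\Delta_\eta}$, and expand $\sym_\eta \cdot f = \tfrac{1}{\eta!}\sum_\tau \tau(f)\,\tau$ to recover the explicit sum over $S_\eta$.

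For the second display I would exploit the matrix calculation of \ref{polynomial-dd}. The explicit sum from the first display identifies
\[
D_\sigma^\eta = \sum_{\tau\in S_\eta} X^{\sigma^{-1}}_\tau\, \tau,
\]
where $X^\sigma_\tau = \tau\!\left(\tfrac{\partial_\sigma\Delta_\eta}{\eta!\,\Delta_\eta}\right)$ is the matrix of \ref{polynomial-dd}. On the other hand, Lemma \ref{polynomial-dd} says that $\sigma\bigl((\partial_\tau\Delta_\eta)^*\bigr) = (Y+C)^\sigma_{\tau^{-1}}$, with $Y+C = X^{-1}$. Substituting and reindexing $\tau \mapsto \tau^{-1}$, I would compute
\[
\sum_{\tau\in S_\eta} \sigma\bigl((\partial_\tau\Delta_\eta)^*\bigr)\, D_\tau^\eta
= \sum_{\tau,\rho} (Y+C)^\sigma_{\tau^{-1}}\, X^{\tau^{-1}}_\rho\, \rho
= \sum_\rho \bigl[(Y+C)X\bigr]^\sigma_\rho\, \rho
= \sum_\rho \delta^\sigma_\rho\, \rho = \sigma,
\]
where I use that in $M_{\eta!}(F)$ a right inverse is also a left inverse, since $F$ is a commutative field.

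The main conceptual obstacle is the second identity: one must recognize the right normalization that turns $D_\sigma^\eta$ into the $\sigma^{-1}$-indexed row of the matrix $X$, and keep the relation $\sigma((\partial_\tau\Delta_\eta)^*) = (Y+C)^\sigma_{\tau^{-1}}$ straight through the reindexing so that matrix multiplication in the correct order produces the identity. The manipulations in the first display are, by contrast, routine once one commits to using Lemma \ref{L:dd-algebra}\ref{i:dd-c} to push the divided difference past $\partial_\sigma$.
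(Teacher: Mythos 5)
Your proposal is correct and mirrors the paper's own proof step for step: both derive the first display from items \ref{i:dd-c} and \ref{i:dd-d} of Lemma \ref{L:dd-algebra}, recognize $D_\sigma^\eta = \sum_{\tau} X^{\sigma^{-1}}_\tau \tau$ in terms of the matrix $X$ from \ref{polynomial-dd}, and obtain the second display by inverting $X$ against $Y+C$. Your hedge about the $\tfrac{1}{\eta!}$ normalization is in fact justified: since $D_{w_\eta}^\eta = \partial_{w_\eta}$, the middle term $D_{w_\eta}^\eta\cdot\partial_{\sigma^{-1}}\Delta_\eta$ of the first display as printed is off by a factor of $\eta!$, and the computation (yours and the paper's) really uses $\sym_\eta = \tfrac{1}{\eta!}\partial_{w_\eta}\cdot\Delta_\eta$, which yields the correct explicit sum.
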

\begin{proof}
By item \ref{i:dd-d} of Lemma \ref{L:dd-algebra} $D^\eta_{w_\eta} = 
\frac{1}{\eta!} \partial_{w_\mu} \cdot \Delta_\eta $, so by item \ref{i:dd-c}
of the same lemma
\begin{align*}
D_\sigma^\eta 
  &= \frac{1}{\eta!} \partial_{w_\eta} \cdot \Delta_\eta \partial_\sigma
  = \frac{1}{\eta!} \partial_{w_\eta} \cdot \partial_{\sigma^{-1}} \Delta_\eta
  = \sym_\eta \cdot \frac{\partial_{\sigma}\Delta_\eta}{\Delta_\eta}.
\end{align*}
Using the fact that $\tau \cdot f = \tau(f) \tau$ the first equality follows.
In terms of the matrices $X, Y$ from the previous paragraph, this says that
$D_\sigma^\eta = \sum_{\tau \in S_\eta} X_\tau^{\sigma^{-1}} \tau$. Since $X$
is invertible with inverse $Y + C$, we get that
\begin{align*}
\sigma &= \sum_{\tau \in S_\eta} (Y+C)_{\tau^{-1}}^\sigma D_\tau^\eta
\end{align*}
which is the second equality.
\end{proof}

As a nice application of this proposition notice that for each $f \in F$ 
\begin{align*}
f &= \sum_{\sigma \in S_\eta} D_\sigma^\eta (f) 
  (\partial_{\sigma} \Delta_\eta)^*.
\end{align*}
In particular, if $A$ is a $\CC[X_\mu]$-subalgebra of $F$ and it is stable by 
the action of $S_\eta$ then the set $\{(\partial_\sigma \Delta_\eta)^* \mid 
\sigma \in S_\eta\}$ is a basis of $A$ over $A^{S_\eta}$, and symmetrized 
divided differences give the coefficients of each element in this basis.

\section{Singular GT modules}
\label{SINGULAR-GT}
Recall that we have fixed $n \in \NN$ and $U = U(\gl(n, \CC))$. Also, we set 
$N = \frac{n(n+1)}{2}$ and we put $\mu = (1,2,\ldots, n)$ and we 
denote by $\ZZ^\mu_0$ the set of all integral $\mu$-points with $z_{n,i} = 0$ 
for all $1 \leq i \leq n$. Recall that if $\eta$ is a refinement of $\mu$ then 
every $\mu$-point can be seen as an $\eta$-point, and we may speak freely of 
the $\eta$-blocks of a $\mu$-point. Finally, recall $K = \CC(X_\mu)$

\paragraph
\label{singularity}
To each $\mu$-point $v$, or to its corresponding tableau $T(v)$, we associate 
a refinement of $\mu$ which we denote by $\eta(v)$ which will act as a 
measure of how far is a tableau from being generic. For any $k \in 
\interval{n-1}$ form a graph with vertices $\interval{k}$, and put an edge 
between $i$ and $j$ if and only if $v_{k,i} - v_{k,j} \in \ZZ$; the resulting 
graph is the disjoint union of complete graphs, and we set $\eta^{(k)}$ to be 
the cardinalities of each connected component arranged in descending order. 
Finally we set $\eta(v) = (\eta^{(1)}, \ldots, \eta^{(n-1)}, 1^n)$, where $1^n$
denotes the composition of $n$ consisting of $n$ ones. Thus if $v$ is generic 
then $\eta^{(k)}(v) = 1^k$, and if it is a singular tableaux then $\eta(v)$ 
will have at least one part larger than $1$. 
\begin{Definition*}
Given $v \in \CC^\mu$ the composition $\eta(v)$ is called the 
\emph{singularity} of $v$. Given $\theta$ a refinement of $\mu$, we will say 
that $v$ is \emph{$\theta$-singular} if $\eta(v) = \theta$.
\end{Definition*}

Recall that characters of the GT subalgebra $\Gamma$ are in one to one 
correspondence with $S_\mu$-orbits of $\CC^\mu$. Now if $v \in \CC^\mu$ and 
$\sigma \in S_\mu$ then $\eta(v) = \eta(\sigma(v))$, so the singularity of
a character is well-defined. We say that $v$ is in \emph{normal form}
if $v_{k,i} - v_{k,j} \in \ZZ_{\geq 0}$ implies that $v_{k,i}$ and $v_{k,j}$ 
lie in the same $\eta(v)$-block of $v$ and that $i>j$. 

We denote by $\st(v)$ the stabilizer of $v$ in $S_\eta$. We say that $v$ is 
\emph{fully critical} if it is in normal form and $\st(v) = S_\eta$, or in 
other words if $v_{k,i} - v_{k,j} \in \ZZ$ implies $v_{k,i} = v_{k,j}$. We 
will say that $v$ is fully $\eta$-critical if it is both $\eta$-singular and 
fully critical. By definition every character has at least one representative 
in normal form (though it may have many), and if $v \in \CC^\mu$ is in normal 
form then there exists $z \in \ZZ^\mu_0$ such that $v+z$ is fully critical.

\begin{Example*}
Suppose $n = 5$ and let $v\in \CC^\mu$ be the point whose corresponding
tableau is

\begin{tikzpicture}
\node (tv) at (-3, 1.5) {$T(v) = $};

\node (51) at (-2,2.5) {$*$};
\node (52) at (-1,2.5) {$*$};
\node (53) at (0,2.5) {$*$};
\node (54) at (1,2.5) {$*$};
\node (55) at (2,2.5) {$*$};

\node (41) at (-1.5,2) {$a$};
\node (42) at (-0.5,2) {$b-1$};
\node (43) at (0.5,2) {$b$};
\node (44) at (1.5,2) {$a+1$};

\node (31) at (-1,1.5) {$c$};
\node (32) at (0,1.5) {$c+1$};
\node (33) at (1,1.5) {$d$};

\node (21) at (-.5,1) {$e$};
\node (22) at (.5,1) {$e$};

\node (11) at (0,0.5) {$f$};

\node (A) at (-3, 2.75) {};
\node (B) at (3, 2.75) {};
\node (C) at (0,0) {};

\end{tikzpicture}

\noindent where $a,b,c,d,e,f \in \CC$ are $\ZZ$-linearly independent. 
Its singularity is given by the refinement $\eta(v) = 
((1),(2),(2,1),(2,2),(1,1,1,1,1))$. It is not in normal form, since for 
example in the third row $c$ is to the left of $c+1$, and in the fourth row
the entries differing by integers are not organazied in $\eta(v)$-blocks.
The $\mu$-points $v', v''$ whose tableaux are 

\begin{tikzpicture}
\node (tv) at (-3, 1.5) {$T(v') = $};

\node (51) at (-2,2.5) {$*$};
\node (52) at (-1,2.5) {$*$};
\node (53) at (0,2.5) {$*$};
\node (54) at (1,2.5) {$*$};
\node (55) at (2,2.5) {$*$};

\node (41) at (-1.5,2) {$a+1$};
\node (42) at (-0.5,2) {$a$};
\node (43) at (0.5,2) {$b$};
\node (44) at (1.5,2) {$b-1$};

\node (31) at (-1,1.5) {$c+1$};
\node (32) at (0,1.5) {$c$};
\node (33) at (1,1.5) {$d$};

\node (21) at (-.5,1) {$e$};
\node (22) at (.5,1) {$e$};

\node (11) at (0,0.5) {$f$};

\node (A) at (-3, 2.75) {};
\node (B) at (3, 2.75) {};
\node (C) at (0,0) {};

\end{tikzpicture}
\begin{tikzpicture}
\node (tv) at (-3, 1.5) {$T(v'') = $};

\node (51) at (-2,2.5) {$*$};
\node (52) at (-1,2.5) {$*$};
\node (53) at (0,2.5) {$*$};
\node (54) at (1,2.5) {$*$};
\node (55) at (2,2.5) {$*$};

\node (41) at (-1.5,2) {$b$};
\node (42) at (-0.5,2) {$b-1$};
\node (43) at (0.5,2) {$a+1$};
\node (44) at (1.5,2) {$a$};

\node (31) at (-1,1.5) {$c+1$};
\node (32) at (0,1.5) {$c$};
\node (33) at (1,1.5) {$d$};

\node (21) at (-.5,1) {$e$};
\node (22) at (.5,1) {$e$};

\node (11) at (0,0.5) {$f$};

\node (A) at (-3, 2.75) {};
\node (B) at (3, 2.75) {};
\node (C) at (0,0) {};

\end{tikzpicture}

\noindent are in normal form, and since both are in the $S_\mu$-orbit of $v$
they define the same character of $\Gamma$. None of these $\mu$-points is 
fully critical, but it is clear that we may obtain a fully critical 
$\mu$-point by adding a suitable $z \in \ZZ^\mu_0$ to either $v'$ or $v''$. 
Notice that in all these cases the entries in the top row are irrelevant.
\end{Example*}

\paragraph
\label{L:derived-tableaux}
Let $C = \{x_{k,i} - x_{k,j} - z \mid 1 \leq i < j \leq k < n, z \in 
\ZZ\setminus \{0\}\} \subset \CC[X_\mu]$. We put $B = C^{-1} \CC[X_\mu]$.
Let $v \in \CC^\mu$ be fully critical, and fix $\eta = \eta(v)$. We denote by 
$B_\eta$ the localization of $B$ at the set of all $x_{k,i} - x_{k,j}$ such 
that $(k,i)$ and $(k,j)$ lie in different orbits of $S_\eta$. 
The algebra $B_\eta$ is closed under the action of $S_\eta$, and hence by the 
discussion at the end of paragraph \ref{L:dd-algebra}, if $f \in B_\eta$ then 
$D_\sigma^\eta(f) \in B_\eta$.

\begin{Definition*}
We set $L_\eta \subset V_K$ to be the $B_\eta$-span of $\{D_\sigma^\eta T(z) 
\mid \sigma \in S_\eta, z \in \ZZ^\mu_0\}$. 
\end{Definition*}

We will now prove that $L_\eta$ is a $U$-submdule of $V_K$, but in order to
do this we first need to show that it is a free $B_\eta$-module.

Notice that if $z \in \ZZ^\mu_0$ then $\mu$-point $v+z$ is in normal form if 
and only if each $\eta$-block of $z$ is a descending sequence, so the set
$\N_\eta = \{z \in \ZZ^\mu_0 \mid v+z \mbox{ is in normal form }\}$.
depends only of $\eta$ and not of $v$. Also, there is exactly one 
element in the orbit of $z$ which lies in $\N_\eta$, so this set is a family
of representatives of $\ZZ^\mu_0 / S_\eta$. We denote by $\st(z)$ the 
stabilizer of $z$ in $S_\eta$. If $z \in \N_\eta$ then $\st(z) = 
S_{\epsilon(z)}$ with $\epsilon(z)$ a refinement of $\eta$.  
\begin{Lemma*}
The set $\mathcal B =\left\{D_\sigma^\eta T(z) \mid z \in \N_\eta, \sigma \in 
\Shuffle_{\epsilon(z)}^\eta\right\}$ is a basis of $L_\eta$ as $B_\eta$-module.
\end{Lemma*}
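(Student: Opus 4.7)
The plan is to split $L_\eta$ as a direct sum indexed by $S_\eta$-orbits in $\ZZ^\mu_0$ and then treat each orbit separately. Since each $D_\sigma^\eta T(z)$ is a linear combination of $T(z')$ for $z'$ in the $S_\eta$-orbit of $z$, we have $L_\eta = \bigoplus_O L_\eta(O)$, where $O$ runs over $S_\eta$-orbits in $\ZZ^\mu_0$ and $L_\eta(O)$ denotes the $B_\eta$-span of $\{D_\sigma^\eta T(z) : \sigma \in S_\eta,\ z \in O\}$. Fix such an $O$ and let $z_0 \in O \cap \N_\eta$ be the normal-form representative, so that $\st(z_0) = S_\epsilon$ with $\epsilon = \epsilon(z_0)$. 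It suffices to prove that $\{D_\nu^\eta T(z_0) : \nu \in \Shuffle^\eta_\epsilon\}$ is a $B_\eta$-basis of $L_\eta(O)$.

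The key preliminary observation I would establish is $D_\sigma^\eta T(z_0) = 0$ for every $\sigma \in S_\eta \setminus \Shuffle^\eta_\epsilon$. Factor $\sigma = \sigma_1 \sigma_2$ with $\sigma_1 \in \Shuffle^\eta_\epsilon$ the minimal-length representative of $\sigma S_\epsilon$ and $\sigma_2 \in S_\epsilon$ nontrivial; length additivity gives $\partial_\sigma = \partial_{\sigma_1} \partial_{\sigma_2}$. Every permutation appearing in the expansion of $\partial_{\sigma_2}$ lies in $\st(z_0)$, so $\partial_{\sigma_2}$ acts on $1 \ot T(z_0)$ as $\partial_{\sigma_2}(1) \ot T(z_0) = 0$; hence $D_\sigma^\eta T(z_0) = 0$.

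For linear independence, the second formula of Proposition \ref{P:sdd} applied to $T(z_0)$ gives, for every $\tau \in S_\eta$,
\begin{align*}
T(\tau z_0) = \tau \cdot T(z_0) = \sum_{\nu \in \Shuffle^\eta_\epsilon} \tau((\partial_\nu \Delta_\eta)^*) \, D_\nu^\eta T(z_0),
\end{align*}
the sum collapsing to shuffle indices by the previous step and the coefficients lying in $\CC[X_\mu] \subset B_\eta$. The $K$-span $W$ of $\{T(z) : z \in O\}$ has $K$-dimension $|O| = |\Shuffle^\eta_\epsilon|$, so the family $\{D_\nu^\eta T(z_0) : \nu \in \Shuffle^\eta_\epsilon\}$ is a $K$-basis of $W$ by a dimension count, and a fortiori $B_\eta$-linearly independent.

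The main obstacle is spanning: every generator $D_\sigma^\eta T(\tau z_0) = (D_\sigma^\eta \tau) T(z_0)$ must be expressed as a $B_\eta$-combination (not merely a $K$-combination) of the basis. Substituting the displayed expansion reduces this to simplifying $D_\sigma^\eta \cdot p \cdot D_\nu^\eta$ in $K \# S_\eta$ for $p \in \CC[X_\mu]$. The identity $\tau \sym_\eta = \sym_\eta$ for all $\tau \in S_\eta$ readily produces two absorbing relations, $\partial_\sigma \cdot p \cdot \sym_\eta = \partial_\sigma(p) \sym_\eta$ and $\sym_\eta \cdot f \cdot \sym_\eta = \sym_\eta(f) \sym_\eta$ for $f \in K$; combining them gives $D_\sigma^\eta \cdot p \cdot D_\nu^\eta = \sym_\eta(\partial_\sigma(p)) \, D_\nu^\eta$, so that
\begin{align*}
D_\sigma^\eta T(\tau z_0) = \sum_{\nu \in \Shuffle^\eta_\epsilon} \sym_\eta(\partial_\sigma(\tau((\partial_\nu \Delta_\eta)^*))) \, D_\nu^\eta T(z_0),
\end{align*}
with coefficients in $\CC[X_\mu]^{S_\eta} \subset B_\eta$. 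Summing the orbit-wise bases over all $O$ recovers $\mathcal B$ and completes the proof.
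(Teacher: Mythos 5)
Your proof is correct and follows essentially the same route as the paper: decompose $L_\eta$ as a direct sum over $S_\eta$-orbits of $\ZZ^\mu_0$, show that $D_\sigma^\eta T(z_0)$ vanishes when $\sigma$ is not an $\epsilon(z_0)$-shuffle, invert via Proposition~\ref{P:sdd} to express $T(\tau z_0)$ in the derived-tableau basis, and close with a $K$-dimension count on the orbit span. The only differences are presentational --- you peel off the parabolic factor $\sigma_2 \in S_\epsilon$ all at once rather than one simple reflection at a time, and you compute the change-of-basis coefficients explicitly rather than phrasing the spanning step as the equality $\DD(z) = \DD(\tau z)$.
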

\begin{proof}
For each $z \in \ZZ^\mu_0$ we denote by $\O(z)$ the $K$-vector-space generated 
by $\{T(\sigma(z)) \mid \sigma \in S_\eta\}$, and by $\DD(z)$ the 
$B_\eta$-module generated by $\{D_\sigma^\eta T(z) \mid \sigma \in S_\eta \}$. 
Clearly $L_\eta = \sum_{z \in \ZZ_0^\mu} \DD(z)$. By Proposition \ref{P:sdd}
\begin{align*}
D_\sigma^\eta T(z)
  &= \frac{1}{\eta!}\sum_{\tau \in S_\eta} \tau \left(
    \frac{\partial_{\sigma^{-1}}\Delta_\eta}{\Delta_\eta} 
    \right) T(\tau(z)) \\
T(\sigma (z)) &= \sum_{\tau \in S_\eta} \sigma((\partial_{\tau} \Delta_\eta)^*)
  D_\tau^\eta T(z),
\end{align*}
so $\DD(z) \subset \O(z)$ and $\O(z) \subset K \ot_{B_\eta} \DD(z)$. 

Let $v \in V_K$ be $S_\eta$-invariant and let $s \in S_\eta$ be a simple 
transposition. Then for each $f \in K$ we have $\partial_s(f v) = 
\partial_s(f) s(v) +  f \partial_s(v) = \partial_s(f) v$. A simple induction 
shows that $\partial_\sigma(fv) = \partial_\sigma(f) v$, and hence 
$D_\sigma^\eta(f v) = D_\sigma^\eta(f) v$ for all $\sigma \in S_\eta$.
Thus for each $\nu \in S_\eta$
\begin{align*}
D_\nu^\eta T(\sigma (z))
  &= D_\nu^\eta \left(
    \sum_{\tau \in S_\eta} \sigma((\partial_{\tau} \Delta_\eta)^*)
  D_\tau^\eta T(z)
  \right)
  = \sum_{\tau \in S_\eta} D_\nu^\eta(\sigma(\partial_{\tau} \Delta_\eta)^* )
  D_\tau^\eta T(z),
\end{align*}
which shows that $\DD(z) = \DD(\tau(z))$, and hence $L_\eta = \sum_{z \in 
\N_\eta} \DD(z)$. Since $\DD(z) \subset \O(z)$, the sum is direct and 
$L_\eta = \bigoplus_{z \in \N_\eta} \DD(z)$. Hence to prove the statement it 
is enough to show that for each $z \in \N_\eta$ the $B_\eta$-module $\DD(z)$ 
is free with basis $\mathcal B(z) = \mathcal B \cap \DD(z)$.

Let $z \in \N_\eta$ and put $\epsilon = \epsilon(z)$. If $\sigma \in S_\eta$ 
is not an $\epsilon$-shuffle then $\sigma$ can be written as $\tilde \sigma s$ 
with $s \in S_{\epsilon}$ a simple transposition. Since $s T(z) = T(z)$
we get
\begin{align*}
D_\sigma^\eta T(z) 
  &= \sym_\eta (\partial_{\tilde \sigma} (\partial_s T(z))) = 0.
\end{align*}
This implies that $\mathcal B(z)$ generates $\DD(z)$ over $B_\eta$, and
since $\O(z) = K \ot_{B_\eta} \DD(z)$ it also generates $\O(z)$ over $K$.
The set $\Shuffle_{\epsilon}^\eta$ is a complete set of representatives of 
$S_\eta / S_{\epsilon}$, so $\# \mathcal B(z) = \# \Shuffle_{\epsilon}^\eta = 
\eta!/\epsilon!$. On the other hand $\dim_K \O(z) = \# (S_\eta / S_\epsilon) = 
\eta!/\epsilon!$, so $\mathcal B(z)$ is a basis of $\O(z)$, and in particular
it is linearly independent over $B_\eta$. 
\end{proof}
We refer to the elements of $\mathcal B$ as \emph{derived tableaux}, and the
elements in $\mathcal B(z)$ as the derived tableaux of $T(z)$.

\begin{Theorem}
The $B_\eta$-lattice $L_\eta$ is a $U$-submodule of $V_K$.
\end{Theorem}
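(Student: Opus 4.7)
The plan is to verify that $E \cdot L_\eta \subseteq L_\eta$ for each canonical generator $E$ of $U$. Since the $U$-action on $V_K$ is $K$-linear, hence $B_\eta$-linear, and $L_\eta$ is the $B_\eta$-span of the derived tableaux $D_\sigma^\eta T(z)$, it suffices to show $E \cdot D_\sigma^\eta T(z) \in L_\eta$. The key first observation is that the $U$-action on $V_K$ is $K$-linear and $S_\mu$-equivariant (by the Proposition of paragraph \ref{big-module}), so it commutes with the entire action of $F \# S_\eta$ on $V_K$; in particular $E \cdot D_\sigma^\eta = D_\sigma^\eta \cdot E$ as $\CC$-linear operators. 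This reduces the problem to checking that $D_\sigma^\eta(E \cdot T(z)) \in L_\eta$ for each $\sigma$, $z$, and each generator $E$.

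The Cartan case is immediate: $E_{k,k} T(z) = p_{k,z} T(z)$ with $p_{k,z}$ an $S_\eta$-invariant polynomial (a sum of the entries $x_{l,j}+z_{l,j}$ in rows $k$ and $k-1$), and item \ref{i:dd-b} of Lemma \ref{L:dd-algebra} implies that $S_\eta$-invariants commute with $D_\sigma^\eta$, so $D_\sigma^\eta(p_{k,z} T(z)) = p_{k,z} D_\sigma^\eta T(z) \in L_\eta$. The off-diagonal cases $E_{k,k+1}$ and $E_{k+1,k}$ are symmetric, so I would treat $E_{k,k+1}$ only. Combining the action formula with Proposition \ref{P:sdd} and item \ref{i:dd-d} of Lemma \ref{L:dd-algebra}, a direct calculation yields
\[
  D_\sigma^\eta(E_{k,k+1} T(z)) = -\sum_{j=1}^k \sym_\eta\!\left(\frac{\partial_{\sigma^{-1}}(\Delta_\eta)\, e_{k,j}^+(x+z)}{\Delta_\eta}\, T(z + \delta^{k,j})\right).
\]
I would then group the indices $j$ by their $\st(z)$-orbit on row $k$, so that within each such orbit the tableaux $T(z + \delta^{k,j})$ all coincide with $T(\tau w)$ for a common $w \in \ZZ^\mu_0$ and varying $\tau \in S_\eta$; the corresponding partial sum then reorganizes as $\sym_\eta(F \cdot T(w))$ for some $F \in K$.

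The final step relies on the observation that $\sym_\eta(F \cdot T(w)) \in L_\eta$ whenever $F \in B_\eta$: expanding such an $F$ in the basis $\{(\partial_\nu \Delta_\eta)^* : \nu \in S_\eta\}$ of $B_\eta$ over $B_\eta^{S_\eta}$ (see paragraph \ref{polynomial-dd}) and applying Proposition \ref{P:sdd} rewrites $\sym_\eta(F \cdot T(w))$ as a $B_\eta$-linear combination of derived tableaux $D_\nu^\eta T(w)$. The main obstacle will therefore be showing that $F \in B_\eta$, that is, that the denominators $x_{k,j} - x_{k,l} + (z_{k,j} - z_{k,l})$ with $j, l$ in the same $\eta^{(k)}$-orbit and $z_{k,j} = z_{k,l}$---the only factors not invertible in $B_\eta$---cancel after summing over $j$ in the orbit. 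This cancellation is a Lagrange-interpolation-style identity that must be worked out by analyzing how $\partial_{\sigma^{-1}}(\Delta_\eta)$ and the numerators of the $e_{k,j}^+(x+z)$ combine on the offending hyperplanes.
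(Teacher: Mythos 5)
Your overall structure is the same as the paper's: both use the $K$-linearity and $S_\eta$-equivariance of the $U$-action to commute $E$ past $D_\sigma^\eta$, so the problem reduces to showing that applying $E$ (and then $D_\sigma^\eta$) to an ordinary tableau $T(z)$ stays in $L_\eta$. Your Cartan argument is correct and matches the spirit of the paper's. The difference, and the gap, is in how you handle $E_{k,k+1}$ and $E_{k+1,k}$.

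Your plan for the off-diagonal generators is to write out $D_\sigma^\eta(E_{k,k+1}T(z))$ explicitly, reorganize the sum by grouping indices $j$ in the same $\st(z)$-orbit, and then prove by hand a ``Lagrange-interpolation-style'' cancellation ensuring the combined coefficient lies in $B_\eta$. You explicitly concede that this last step is not carried out; it is exactly the hard part, and as written your proof is incomplete. The paper avoids the computation entirely with a symmetry argument. After noting that $\Delta_{\epsilon(z)}\,e^\pm_{k,i}(x+z) \in B_\eta$ (so $\Delta_{\epsilon(z)} E\,T(z)$ is a $B_\eta$-combination of plain tableaux, hence lies in $L_\eta$), one writes $E\,T(z) = \sum \frac{f_{\sigma,w}}{\Delta_{\epsilon(z)}} D_\sigma^\eta T(w)$ with $f_{\sigma,w} \in B_\eta$ unique, since $\mathcal B$ is a basis. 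Because the $U$-action is $S_\eta$-equivariant and $T(z)$ is fixed by $S_{\epsilon(z)} = \st(z)$, the vector $E\,T(z)$ is also $S_{\epsilon(z)}$-invariant; and every $D_\sigma^\eta T(w)$ is $S_\eta$-invariant (it is in the image of $\sym_\eta$). Matching coefficients in the $K$-basis $\mathcal B$ forces each $\frac{f_{\sigma,w}}{\Delta_{\epsilon(z)}}$ to be $S_{\epsilon(z)}$-invariant, hence $f_{\sigma,w}$ to be $S_{\epsilon(z)}$-antisymmetric, hence divisible by $\Delta_{\epsilon(z)}$ in $B_\eta$. This is precisely the cancellation your approach leaves unproved, obtained purely from equivariance without touching the Gelfand-Tsetlin rational functions. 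You should replace your final paragraph by this argument, or else actually prove the interpolation identity you invoke; as it stands, the proof has a genuine hole at its crux.
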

\begin{proof}
Let $z \in \N_\eta$ and let $\epsilon$ be the unique refinement of $\eta$ such 
that $S_\epsilon \subset S_\eta$ is the stabilizer of $z$ in $S_\eta$. For 
each $k \in \interval{n}$ the $K$-vector-space $\O(z)$ consists of 
eigenvectors of $E_{k,k}$ with the same eigenvalue; since every derived 
tableaux of $T(z)$ lies in $\O(z)$, we get that $E_{k,k} L_\eta \subset 
L_\eta$. Hence we only need to show that $E L_\eta \subset L_\eta$ for $E \in 
\{E_{k,k+1}, E_{k+1,k} \mid 1 \leq k \leq n-1\}$

It follows from the definitions that $\Delta_\epsilon e_{k,i}^\pm(x+z) \in 
B_\eta$ for each $k \in \interval{n-1}$, so $\Delta_\epsilon E T(z)$ is a 
linear combination of tableaux $T(w)$. Now since $T(w) \in L_\eta$ for all $w 
\in \ZZ^\mu_0$ and $\mathcal B$ is a basis of $L_\eta$, we can write
\begin{align*}
E T(z)
  &= \sum_{D_\sigma^\eta T(w) \in \mathcal B} 
    \frac{f_{\sigma,w}}{\Delta_\epsilon} D_\sigma^\eta T(w)
\end{align*}
with $f_{\sigma,w} \in B_\eta$ unique. 

Since the action of $U$ is $S_\eta$-equivariant and $T(z)$ is stable by
$S_\epsilon$, the same is true for $E T(z)$, and hence the right hand side of
the equation is also $S_\epsilon$-invariant. Since derived tableaux are 
$S_\eta$-invariant we see that $\frac{f_{\sigma,w}}{\Delta_\epsilon}$ is an
$S_\epsilon$-invariant element of $K$, and hence $\tau(f_{\sigma,w}) = 
\sg(\tau) f_{\sigma,w}$ for each $\tau \in S_\epsilon$. This implies that
$f_{\sigma,w} = g_{\sigma,w} \Delta_\epsilon$ with $g_{\sigma,w} \in 
B_\eta^{S_\epsilon}$, and hence $E T(z) \in L_\eta$. Finally, since the action
of $U$ is both $K$-linear and $S_\eta$-equivariant we obtain 
\begin{align*}
E D_\nu^\eta T(z)
  &= D_\nu^\eta (ET(z))
  = \sum_{D_\sigma^\eta T(w) \in \mathcal B} 
    D_\nu^\eta(g_{\sigma, w}) D_\sigma^\eta T(w).
\end{align*}
Now $B_\eta$ is stable by the action of $S_\eta$ and closed under divided 
differences, so $D_\nu^\eta(g_{\sigma, w}) \in B_\eta$ and $E D_\nu^\eta 
T(z) \in L_\eta$.
\end{proof}

\paragraph
\label{L:pre-gt}
Let $v \in \CC^\mu$ be an $\eta$-critical point. Then there is a well-defined
map $\pi_v: B_\eta \to \CC$ given by $\pi_v(f) = f(v)$, and it is clear by 
definition that $\p_\eta \subset \ker \pi_v$. From this we obtain a 
one-dimensional representation of $B_\eta$, which we denote by $\CC_v$. We fix 
a nonzero element $1_v \in \CC_v$.
\begin{Definition*}
Let $v \in \CC^\mu$ be an $\eta$-critical point. We define $V(T(v))$ to be
the complex vector-space $\CC_v \ot_{B_\eta} L_\eta$, with the $U$-module
structure given by the action of $U$ on $L_\eta$.
\end{Definition*}
Given $z \in \N$ and $\sigma \in \Shuffle_{\epsilon(z)}^\eta$, we write
$\D_\sigma^\eta(v+z) = 1_v \ot D_\sigma^\eta(z)$. It follows from Lemma 
\ref{L:derived-tableaux} that the set $\{\D_\sigma^\eta(v+z) \mid z \in 
\N_\eta, \sigma \in \Shuffle_{\epsilon(z)}^\eta\}$ is a basis of $V(T(v))$
as a complex vector-space. 
\begin{Example*}
Fix $n = 4$, let $\eta = (1,1^2, 3, 1^4)$ and let $v$ be any $\eta$-singular 
$\mu$-point. The following table shows the nonzero derived tableaux of 
$T(v+z)$, classified according to the composition $\epsilon(z)$; we always 
assume $a > b > c$.

\begin{tabular}{|c|c|l|}
\hline
$z_3$ & $\epsilon(z)^{(3)}$ & Nonzero derived tableaux \\
\hline
$(a,b,c)$
  & $(1,1,1)$
  & \parbox[c]{9cm}
    {$\D^\eta_\id(v+z), \D^\eta_{(12)}(v+z), \D^\eta_{(23)}(v+z),$\\ $\D^\eta_{(123)}(v+z), 
    \D^\eta_{(132)}(v+z), \D^\eta_{(13)}(v+z)$} \\
\hline
$(a,a,b)$
  & $(2,1)$
  & $\D^\eta_\id(v+z), \D^\eta_{(23)}(v+z), \D^\eta_{(123)}(v+z)$ \\
\hline
$(a,b,b)$
  & $(1,2)$ 
  & $\D^\eta_\id(v+z), \D^\eta_{(12)}(v+z), \D^\eta_{(132)}(v+z)$ \\
\hline
$(a,a,a)$
  & $(3)$
  & $\D^\eta_\id(v+z)$ \\
\hline
\end{tabular}
\end{Example*}

\paragraph
We finish with the proof that $V(T(v))$ is always a Gelfand-Tsetlin module, 
and we find its support along with the multiplicity of each character. 
We need the following preeliminary result.

\begin{Lemma*}
Let $\sigma, \nu, \tau \in S_\eta$. Then $d_{\sigma, \tau}^\nu = D_{\nu}^\eta(
(\partial_{\sigma} \Delta_\eta)^* (\partial_{\tau} \Delta_\eta)^* ) \equiv 0
\mod \p_\eta$ unless $\ell(\sigma) + \ell(\tau) = \ell(\nu)$. Furthermore
$d_{\nu, \id}^\nu = d_{\id, \nu}^\nu = 1$.
\end{Lemma*}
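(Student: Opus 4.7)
The plan is to reduce the first assertion to a key claim about the $\p_\eta$-adic order of $(\partial_\rho\Delta_\eta)^*$, and to handle the second assertion using the basis expansion identity stated after Proposition \ref{P:sdd}.

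For the identity $d^\nu_{\nu,\id} = d^\nu_{\id,\nu} = 1$: since $d^\nu_{\sigma,\tau}$ is symmetric in $\sigma$ and $\tau$ because the polynomial product is commutative, it suffices to compute $d^\nu_{\nu,\id}$. I would first observe that $(\partial_\id\Delta_\eta)^* = 1$: in the expansion $f = \sum_\sigma D^\eta_\sigma(f)(\partial_\sigma\Delta_\eta)^*$, taking $f = 1$ and using that $\partial_\sigma(1) = 0$ for $\sigma \neq \id$ leaves only the $\sigma = \id$ term, yielding $1 = (\partial_\id\Delta_\eta)^*$. Applying the same expansion to $f = (\partial_\nu\Delta_\eta)^*$ and invoking uniqueness of the coefficients gives $D^\eta_\sigma((\partial_\nu\Delta_\eta)^*) = \delta_{\sigma,\nu}$, so $d^\nu_{\nu,\id} = D^\eta_\nu((\partial_\nu\Delta_\eta)^*\cdot 1) = 1$.

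For the congruence $d_{\sigma,\tau}^\nu \equiv 0 \mod \p_\eta$ whenever $\ell(\sigma)+\ell(\tau) \neq \ell(\nu)$, everything reduces to the key claim that $(\partial_\rho\Delta_\eta)^* \in \p_\eta^{\ell(\rho)}$ for each $\rho \in S_\eta$. Given the claim, the product $(\partial_\sigma\Delta_\eta)^*(\partial_\tau\Delta_\eta)^*$ is a polynomial of ordinary degree $\ell(\sigma)+\ell(\tau)$ lying in $\p_\eta^{\ell(\sigma)+\ell(\tau)}$. If $\ell(\sigma)+\ell(\tau) < \ell(\nu)$, then $\partial_\nu$ annihilates the product by degree reasons, giving $d_{\sigma,\tau}^\nu = 0$. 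If $\ell(\sigma)+\ell(\tau) > \ell(\nu)$, then an easy Leibniz-rule induction using the $S_\eta$-stability of $\p_\eta$ shows $\partial_s(\p_\eta^k)\subseteq\p_\eta^{k-1}$ for every simple reflection $s$, hence $\partial_\nu(\p_\eta^k)\subseteq\p_\eta^{k-\ell(\nu)}$; applied here, $\partial_\nu$ sends the product into $\p_\eta^{\ell(\sigma)+\ell(\tau)-\ell(\nu)} \subseteq \p_\eta$, and since $\sym_\eta$ preserves $\p_\eta$, we conclude $d_{\sigma,\tau}^\nu \in \p_\eta$.

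The main technical obstacle is the key claim $(\partial_\rho\Delta_\eta)^* \in \p_\eta^{\ell(\rho)}$, which I would prove via a telescoping argument on the matrix inverse formula from paragraph \ref{polynomial-dd}. Since $\Delta_\eta$ is a product of $\ell(w_\eta)$ generators of $\p_\eta$, it lies in $\p_\eta^{\ell(w_\eta)}$, and iterating the bound $\partial_s(\p_\eta^k)\subseteq\p_\eta^{k-1}$ gives $\partial_\alpha\Delta_\eta \in \p_\eta^{\ell(w_\eta)-\ell(\alpha)}$ for every $\alpha \in S_\eta$. In particular $Y^\id_\alpha = \frac{1}{\eta!}\partial_{\alpha w_\eta}\Delta_\eta \in \p_\eta^{\ell(\alpha)}$, and for $\ell(\alpha)<\ell(\beta)$ the entries $R^\alpha_\beta := (XY-I)^\alpha_\beta = \frac{1}{\eta!}\sym_\eta(\partial_{w_\eta\beta^{-1}}\partial_\alpha\Delta_\eta)$ lie in $\p_\eta^{\ell(\beta)-\ell(\alpha)}$, since $\sym_\eta$ preserves $\p_\eta$-powers. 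The matrix $R$ is strictly upper triangular in the length ordering and hence nilpotent, so $X^{-1} = Y(I+R)^{-1} = \sum_{k\geq 0}(-1)^k Y R^k$ is a finite sum, and $(\partial_\rho\Delta_\eta)^* = (X^{-1})^\id_{\rho^{-1}}$ expands as a sum of products $Y^\id_{\alpha_0}R^{\alpha_0}_{\alpha_1}\cdots R^{\alpha_{k-1}}_{\alpha_k}$ with $\alpha_k = \rho^{-1}$ and $\ell(\alpha_0)<\cdots<\ell(\alpha_k)=\ell(\rho)$; the $\p_\eta$-orders of the factors telescope to $\ell(\alpha_0) + \sum_{i=0}^{k-1}(\ell(\alpha_{i+1})-\ell(\alpha_i)) = \ell(\rho)$, so each summand, and hence $(\partial_\rho\Delta_\eta)^*$, lies in $\p_\eta^{\ell(\rho)}$.
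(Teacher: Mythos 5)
Your proof is correct, and it differs from the paper's in a worthwhile way. For the congruence, the paper reduces modulo the ideal $I$ generated by $\p_\eta^{S_\eta}$, using that $D_\nu^\eta$ is $\p_\eta^{S_\eta}$-linear to replace each $(\partial_\sigma\Delta_\eta)^*$ by its leading term $\frac{1}{\eta!}\partial_{\sigma^{-1}w_\eta}\Delta_\eta$ modulo a term already in $\p_\eta$, and then argues by homogeneous degree inside $\CC[\p_\eta]$. You instead prove the stronger, quantitatively sharper statement $(\partial_\rho\Delta_\eta)^* \in \p_\eta^{\ell(\rho)}$, which the paper's lemma does not record: your telescoping argument in the Neumann series $X^{-1} = \sum_{k\ge 0}(-1)^k Y R^k$ is valid because $Y^{\id}_{\alpha_0}$ has $\p_\eta$-order $\ell(\alpha_0)$ and each nonzero $R^{\alpha_i}_{\alpha_{i+1}}$ has $\p_\eta$-order $\ell(\alpha_{i+1})-\ell(\alpha_i)$ (the latter needs the observation that $\sym_\eta$ and each $\partial_s$ preserve the $\p_\eta$-adic filtration, with $\partial_s$ dropping it by at most one, which you supply). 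The payoff is that the final step only requires $\partial_\nu(\p_\eta^k)\subseteq\p_\eta^{k-\ell(\nu)}$, rather than the paper's case split between "positive degree" and "$\p_\eta^{S_\eta}$-linearity". Your claim $(\partial_\rho\Delta_\eta)^*\in\p_\eta^{\ell(\rho)}$ may be of some independent interest since it says the $\p_\eta$-adic order of these dual basis elements equals their degree. For $d^\nu_{\nu,\id}=d^\nu_{\id,\nu}=1$, you use the dual-basis uniqueness identity from the end of the section on symmetrized divided differences, which is cleaner than the paper's direct evaluation via $\partial_\nu\partial_{\nu^{-1}w_\eta}\Delta_\eta=\partial_{w_\eta}\Delta_\eta=\eta!$; both are fine.
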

\begin{proof}
Recall that $(\partial_\sigma \Delta_\eta)^* = \frac{1}{\eta!}
\partial_{\sigma^{-1} w_\mu} \Delta_\eta + c_{\sigma^{-1}w_\eta}$ with 
$c_{\sigma^{-1}w_\eta}$ lying in the ideal generated by $\p_\eta^{S_\eta}$, 
and that this is a homogeneous polynomial of degree $\ell(\sigma)$.
Recall also that the algebra $\CC[\p_\eta]$ generated by $\p_\eta$ is closed 
by the action of the symmetrized divided difference operators, and that
a polynomial in $\CC[\p_\eta]$ lies in $\p_\eta$ if and only if it is of 
strictly positive degree.

By definition $\deg (\partial_\sigma \Delta_\eta)^* (\partial_\tau 
\Delta_\eta)^* = \ell(\sigma) + \ell(\tau)$. This implies that if 
$\ell(\nu) > \ell(\sigma) + \ell(\tau)$ then $d_{\sigma, \tau}^\nu = 0$,
so we may assume from now on that $\ell(\nu) \leq \ell(\sigma) + \ell(\tau)$.
Now if $f \in \p_\eta^{S_\eta}$ then $D_\nu^\eta(fg) = f D_\nu^\eta(g)$ for
any $g \in \CC[X_\mu]$, so 
\begin{align*}
d^\nu_{\sigma, \tau} 
  &\equiv \frac{1}{\eta!^2} D_\nu^\eta((\partial_{\sigma^{-1} w_\mu} 
    \Delta_\eta) (\partial_{\tau^{-1} w_\mu} \Delta_\eta)) \mod \p_\eta.
\end{align*} 
The polynomial in the right hand side of this congruence lies in 
$\CC[\p_\eta]$, and if $\ell(\nu) < \ell(\tau) + \ell(\sigma)$ then its degree
is positive and hence it lies in $\p_\eta$. On the other hand
\begin{align*}
d^\nu_{\nu, \id} 
  &= d^\nu_{\id, \nu}
  = \frac{1}{\eta!^2} \sym_\eta \partial_\nu((\partial_{\nu^{-1} w_\eta} 
    \Delta_\eta) (\partial_{w_\mu} \Delta_\eta)) 
  = \frac{1}{\eta!^2} \sym_\eta (\partial_{w_\eta} \Delta_\eta)^2
  = 1.
\end{align*} 
\end{proof}

\begin{Theorem}
Let $v \in \CC^\mu$ be a critical $\mu$-point, and set $\eta = \eta(v)$. The 
$U$-module $V(T(v))$ is a Gelfand-Tsetlin module whose support is the set 
$\{\chi_{v+z} \mid z \in \N_\eta\}$. Furthermore the multiplicity of 
$\chi_{v+z}$ in $V(T(v))$ is $\eta!/\epsilon(z)!$.
\end{Theorem}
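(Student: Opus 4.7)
The plan is to exploit the decomposition $L_\eta = \bigoplus_{z \in \N_\eta} \DD(z)$ obtained in the proof of the basis lemma (Lemma \ref{L:derived-tableaux}), and to prove that each summand $\CC_v \ot_{B_\eta} \DD(z)$ becomes, after specialization, a generalized eigenspace for the character $\chi_{v+z}$ of $\Gamma$. First I would observe that $\DD(z)$ is $\Gamma$-stable: in $V_K$ each $c \in \Gamma$ acts diagonally on the $K$-subspace $\O(z)$ with eigenvalues $\iota(c)(x + \sigma z)$ indexed by $\sigma \in \Shuffle_{\epsilon(z)}^\eta$, so $\Gamma$ preserves $\O(z)$. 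Since $\O(z) \cap \O(z') = 0$ for distinct $z, z' \in \N_\eta$ and $L_\eta$ is a $U$-submodule of $V_K$ by the preceding theorem, the identity $\DD(z) = L_\eta \cap \O(z)$ holds and forces $c\,\DD(z) \subset \DD(z)$. Passing to the specialization yields $V(T(v)) = \bigoplus_{z \in \N_\eta} \CC_v \ot_{B_\eta} \DD(z)$ as $\Gamma$-modules, where each summand has complex dimension $\eta!/\epsilon(z)!$ by the basis lemma.

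The heart of the argument is a characteristic polynomial computation. On $\O(z)$, in the $K$-basis $\{T(\sigma z)\}_{\sigma \in \Shuffle_{\epsilon(z)}^\eta}$, the operator $c \in \Gamma$ is diagonal with characteristic polynomial $\chi_c(T) = \prod_\sigma (T - \iota(c)(x + \sigma z))$; its coefficients are the elementary symmetric polynomials in the $S_\eta$-orbit of eigenvalues and therefore lie in $B_\eta^{S_\eta}$. This is also the characteristic polynomial of $c$ acting $B_\eta$-linearly on $\DD(z)$, because the matrix of $c$ in the basis $\mathcal B(z)$ has entries in $B_\eta$ by $B_\eta$-stability of $\DD(z)$, and characteristic polynomials are invariant under extension of scalars from $B_\eta$ to $K$. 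Next I would specialize at $x = v$: since $v$ is fully $\eta$-critical we have $\st(v) = S_\eta$, so $S_\mu$-invariance of $\iota(c)$ yields $\iota(c)(v + \sigma z) = \iota(c)(\sigma^{-1}(v) + z) = \iota(c)(v + z) = \chi_{v+z}(c)$ for every $\sigma \in S_\eta$. Therefore $\chi_c(T)|_{x=v} = (T - \chi_{v+z}(c))^{\eta!/\epsilon(z)!}$, so $c - \chi_{v+z}(c)$ is nilpotent on $\CC_v \ot_{B_\eta} \DD(z)$; hence this subspace is contained in $V(T(v))[\chi_{v+z}]$.

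To finish, I would verify that distinct $z, z' \in \N_\eta$ give distinct characters. If $\sigma(v + z) = v + z'$ for some $\sigma \in S_\mu$, then in each row $k$ the $\eta^{(k)}$-blocks of $v + z$ lie in distinct $\ZZ$-cosets of $\CC$ by the normal form condition, so $\sigma^{(k)}$ must preserve the block decomposition; within each block the strict decrease imposed by normal form pins $\sigma^{(k)}$ to the identity, forcing $\sigma = \id$ and $z = z'$. Combined with the inclusions above this gives $V(T(v)) = \bigoplus_{z \in \N_\eta} V(T(v))[\chi_{v+z}]$ with each generalized eigenspace of dimension $\eta!/\epsilon(z)!$, which is the theorem. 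The main subtlety I anticipate is the claim that the matrix of $c$ in the basis $\mathcal B(z)$ lies in $M_d(B_\eta)$: the individual entries of the change of basis between $\{T(\sigma z)\}$ and $\mathcal B(z)$ carry the factor $1/\Delta_\eta$, which has poles along $\p_\eta$ and therefore at $v$, yet $B_\eta$-stability of $\DD(z)$ forces the full conjugate matrix to be regular at $v$. Once this is in place, the characteristic polynomial is a well-defined invariant of the abstract $B_\eta$-linear endomorphism and admits the clean evaluation at $v$ described above.
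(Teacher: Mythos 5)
Your proof is correct, and it takes a genuinely different route from the paper's. The paper argues pointwise on basis vectors: it expands $c\,D^\eta_\nu T(z)$ via Proposition \ref{P:sdd}, invokes the preliminary Lemma \ref{L:pre-gt} on the coefficients $d^\nu_{\sigma,\tau}$ to see that the action of $c-\gamma(v+z)$ is, modulo $\p_\eta$, strictly triangular with respect to length, and concludes $(c-\gamma(v+z))^{\ell(\nu)}\D^\eta_\nu(v+z)=0$. You instead work blockwise: you identify $\DD(z)=L_\eta\cap\O(z)$ as a $\Gamma$-stable free $B_\eta$-module, compute the characteristic polynomial of $c$ on $\O(z)$ in the diagonalizing basis $\{T(\sigma z)\}$, note that its coefficients are $S_\eta$-symmetric polynomials hence lie in $B_\eta$, and let Cayley--Hamilton plus specialization at $v$ do the work. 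This is cleaner in that it entirely bypasses Lemma \ref{L:pre-gt} and the explicit formulas for $d^\nu_{\sigma,\tau}$; the trade-off is that it only yields the coarse nilpotency bound $\eta!/\epsilon(z)!$ rather than the sharper $\ell(\nu)$ per basis vector and does not exhibit the triangular filtration by length that the paper's computation makes visible. One small inaccuracy in the final step: the normal form condition does not in general force $\sigma^{(k)}=\id$ (an element of $\st(z)\cap S_\eta$ would also work), but the conclusion $z=z'$ still follows directly from the fact that $\N_\eta$ is a set of representatives of $\ZZ^\mu_0/S_\eta$, so the argument stands.
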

\begin{proof}
Fix $c \in \Gamma$ and let $\gamma = \iota(c)$. Since $c T(z) = \gamma(x+z) 
T(z)$ for each $z \in \ZZ^\mu_0$, we see that
\begin{align*}
c D_\nu^\eta T(z)
  &= D_\nu^\eta( \gamma(x+z) T(z)).
\end{align*}
Now using Proposition \ref{P:sdd}
\begin{align*}
\gamma(x+z) 
  &= \sum_{\sigma \in S_\eta} (\partial_{\sigma} \Delta_\eta)^*
    D_\sigma(\gamma(x+z)) , \\
T(z)
  &= \sum_{\tau \in S_\eta} (\partial_{\tau} \Delta_\eta)^* 
    D_\tau T(z),
\end{align*}
and plugging this in $D_\nu^\eta( \gamma(x+z) T(z))$ we get
\begin{align*}
c D_\nu^\eta T(z)
  &= \sum_{\sigma,\tau \in S_\eta} D_{\nu}^\eta(
  (\partial_{\sigma} \Delta_\eta)^* (\partial_{\tau} \Delta_\eta)^* )
  D_\sigma^\eta(\gamma(x+z)) D_\tau^\eta T(z).
\end{align*}

By Lemma \ref{L:pre-gt} we get
\begin{align*}
c D_\nu^\eta T(z)
  &\equiv \sym_\eta(\gamma(x+z)) D_\nu^\eta T(z) \\
  &+ \sum_{\substack{\ell(\sigma) + \ell(\tau) = \ell(\nu) \\ \ell(\sigma)>0}} 
    d^\nu_{\sigma, \tau} D_\sigma^\eta(\gamma(x+z)) D_\tau^\eta T(z) 
        \mod \p_\eta L_\eta.
\end{align*}
and using the fact that $\p_\eta \subset \ker \pi_v$ and that $\pi_v(\sym_\eta
f) = f(v)$ for each $f \in \CC[X_\eta]$, 
\begin{align*}
c \D_\nu^\eta (v+z)
  &= \gamma(v+z) \D_\nu^\eta (v+z) \\
  &+ \sum_{\substack{\ell(\sigma) + \ell(\tau) = \ell(\nu) \\ 
      \ell(\tau)<\ell(\nu)}} 
    d^\nu_{\sigma, \tau} \pi_v(D_\sigma^\eta(\gamma(x+z))) \D_\tau^\eta (v+z).
\end{align*}
This shows that $(c-\gamma(v+z))^{\ell(\nu)} \D_\nu^\eta(v+z) = 0$ and hence
$\D_\nu^\eta(v+z) \in V(T(v))[\chi_{v+z}]$. It follows that
\begin{align*}
V(T(v))[\chi_{v+z}]
  &= \langle \D_\nu^\eta(v+z) \mid \nu \in \Shuffle_{\epsilon(z)}^\eta
    \rangle_\CC
\end{align*}
so the multiplicity of $\chi_{v+z}$ in $V(T(v))$ is $\eta!/\epsilon(z)!$.
\end{proof}

\begin{Remark}
Let $\theta$ be a refinement of $(1,2, \ldots, n-1, 1^n)$.
Let $v \in \CC^\mu$ be fully critical and suppose $\eta = \eta(v)$ is
a refinement of $\theta$. Then $B_\theta \subset B_\eta$ and $\CC_v$ is a 
$B_\theta$-module by restriction, so we get a $U$-module by setting $W(T(v))
= \CC_v \ot_{B_\theta} L_\theta$. It is natural to ask whether $W(T(v))$ is 
equal to $V(T(v))$. 

The answer to this question is yes. Since $S_\eta \subset S_\theta$ the 
elements $D^\theta_\tau T(z)$ are $S_\eta$-invariant for each $\tau \in 
S_\theta$, and hence
\begin{align*}
D_\sigma^\eta T(z)
  &= \sum_{\substack{\tau \in S_\theta\\ \ell(\tau) \geq \ell(\sigma)}}
    D_\sigma^\eta ((\partial_\tau \Delta_\theta)^*) D_\tau^\theta T(z)
    \in L_\theta.
\end{align*} 
This implies that $L_\eta \subset B_\eta \ot_{B_\theta} L_\theta$.

Now if $f \in B_\eta$ then there exist $f_\sigma \in B_\eta^{S_\eta}$
such that $f = \sum_{\sigma \in S_\eta} f_\sigma \partial_{\sigma^{-1}}
\Delta_\eta$, and so
\begin{align*}
\sym_\eta \left( \frac{f}{\Delta_\eta} T(z)\right)
  &= \sum_{\sigma \in S_\eta} f_\sigma \sym_\eta \left( 
    \frac{\partial_{\sigma^{-1}} \Delta_\eta}{\Delta_\eta} T(z)
  \right)
  = \sum_{\sigma \in S_\eta} f_\sigma D_\sigma^\eta T(z) \in L_\eta.  
\end{align*}
for each $z \in \ZZ^\mu_0$. Now let $R \subset S_\theta$ be a set of 
representatives of left $S_\eta$-coclasses of $S_\theta$, so $S_\theta
= \bigsqcup_{\sigma \in R} S_\eta \sigma$. Then
\begin{align*}
D_\tau^\theta T(z)
  &= \sum_{\sigma \in R} \sym_\eta \left(
    \frac{\sg(\sigma) \sigma(\partial_{\tau^{-1}} \Delta_\theta)
      (\Delta_\eta/\Delta_\theta)}
    {\Delta_\eta} T(\sigma(z))
  \right).
\end{align*}
Since $\sg(\sigma) \sigma(\partial_{\tau^{-1}} \Delta_\theta) \in \CC[X_\eta]$ 
and $\Delta_\eta/\Delta_\theta \in B_\eta$ we get that $D_\tau^\theta T(z) \in 
L_\eta$ and hence $L_\eta = B_\eta \ot_{B_\theta} L_\theta$, so
\begin{align*}
V(T(v)) 
  = \CC_v \ot_{B_\eta} L_\eta 
  = \CC_v \ot_{B_\eta} (B_\eta \ot_{B_\theta} L_\theta)
  = \CC_v \ot_{B_\theta} L_\theta
  = W(T(v)).
\end{align*}
\end{Remark}

\begin{bibdiv}
\begin{biblist}
\bibselect{biblio}
\end{biblist}
\end{bibdiv}

\end{document}